\title[Superspecial varieties and modular forms]{Superspecial Abelian Varieties and the Eichler Basis Problem for Hilbert Modular Forms}
\author{Marc-Hubert Nicole}
\newtheorem{thm}{Theorem}[section]
\newtheorem{prop}[thm]{Proposition}
\newtheorem{lem}[thm]{Lemma}
\newtheorem{cor}[thm]{Corollary}
\newtheorem{fact}[thm]{Fact}
\newtheorem{dfn}[thm]{Definition}
\newcommand{\End}{{\operatorname{End}}}
\newcommand{\Fr}{{\operatorname{Fr }}}
\newcommand{\Hom}{{\operatorname{Hom}}}
\newcommand{\Norm}{{\operatorname{Norm }}}
\newcommand{\Spec}{{\operatorname{Spec }}}
\newcommand{\Tr}{{\operatorname{Tr }}}
\newcommand{\SL}{{\operatorname{SL }}}
\newcommand{\GL}{{\operatorname{GL}}}
\newcommand{\Gal}{{\operatorname{Gal}}}
\newcommand{\Lie}{{\operatorname{Lie}}}
\newcommand{\gerp}{{\frak{p}}}
\newcommand{\gerq}{{\frak{q}}}
\newcommand{\gerA}{{\frak{A}}}
\newcommand{\gerN}{{\frak{N}}}
\newcommand{\calD}{{\mathcal{D}}}
\newcommand{\calH}{{\mathcal{H}}}
\newcommand{\calM}{{\mathcal{M}}}
\newcommand{\calN}{{\mathcal{N}}}
\newcommand{\calO}{{\mathcal{O}}}
\def\A{\mathbb{A}}
\def\C{\mathbb{C}}
\def\D{\mathbb{D}}
\def\F{\mathbb{F}}
\def\N{\mathbb{N}}
\def\Q{\mathbb{Q}}
\def\Z{\mathbb{Z}}
\def\Fp{\overline{\mathbb{F}}_p}
\newcommand{\Cent}{{{\operatorname{\bf Cent}}}}
\newcommand{\ilim}{{\underset{\longleftarrow}{\lim} \; \; }}
\newcommand{\arr}{{\; \longrightarrow \;}}
\newcommand{\ol}{{\mathcal{O}_L}}
\def\mat#1#2#3#4{\left( \begin{array}{cc} #1 & #2 \\ #3 & #4 \end{array} \right) }
\def\2vector#1#2{\left( \begin{smallmatrix} #1 \\ #2 \end{smallmatrix}
\right)}
\def\deb{ \begin{equation} }
\def\fin{ \end{equation} }
\definecolor{Indigo}{rgb}{0.2,0.1,0.7}
\definecolor{Violet}{rgb}{0.5,0.1,0.7}
\definecolor{White}{rgb}{1,1,1}
\definecolor{Green}{rgb}{0.1,0.9,0.2}
\begin{document}

\maketitle


\medskip\noindent
{\bf ABSTRACT.} Let $p$ be an unramified prime in a totally real
field $L$ such that $h^+(L)=1$. Our main result shows that Hilbert
modular newforms of parallel weight two for $\Gamma_0(p)$ can be
constructed naturally, via classical theta series, from modules of
isogenies of superspecial abelian varieties with real
multiplication on a Hilbert moduli space. This can be viewed as a
geometric reinterpretation of the Eichler Basis Problem for
Hilbert modular forms.

\noindent {\bf Keywords:} Superspecial abelian varieties; Eichler
Basis Problem; Hilbert modular forms; Theta series; Eichler
orders.

\noindent {\bf MSC 2000:} 11G10, 11G18, 11F27, 11F41, and 14G35.

\section{Introduction}

Let $L$ be a totally real number field of degree $[L:\Q]=g$, with
$\calO_L$ its ring of integers. This paper is the first of a
series of two aiming at the generalization of classical theorems
of Deuring and Eichler concerning supersingular elliptic curves
and modular forms to the setting of Hilbert modular varieties. In
this work, we suppose throughout that $p$ is unramified in $L$, to
stay closest to the classical line of thought. If $p$ is ramified,
new phenomena occur and interesting connections can be made with
classical works of Manin and Doi-Naganuma (see
\cite{NicoleRamified}).

\noindent We recall the classical case, for $L = \Q$. Let $H$ be
the class number of $B_{p,\infty}$ i.e., the number of left ideal
classes of a maximal order in the rational quaternion algebra
$B_{p,\infty}$ ramified at $p$ and $\infty$. Let $I_i, I_j, 1 \leq
i,j \leq H$ be left ideal classes representatives. Using the norm
of the quaternion algebra, we can define:
$$Q_{ij}(x) := \Norm(x)/\Norm(I_j^{-1}I_i), \mbox{ for } x \in I_j^{-1} I_i, $$
i.e., a quadratic form of level $p$, discriminant $p^2$, with
values in $\N$. Since the quaternion algebra $B_{p, \infty}$ is
definite (i.e., ramified at the infinite place), the
representation numbers $a(n) := |\{ x | Q_{ij}(x) = n \}|$ are
finite. The theta series
$$\theta_{ij}(z) := \sum_{n \in \N} a(n) q^n, \quad \mbox{ for } q = e^{2 \pi i z}, $$
is a modular form of weight $2$ for $\Gamma_0(p) := \{ (
\begin{smallmatrix} a & b \\ c & d
\end{smallmatrix} ) \in \SL_2(\Z) | ( \begin{smallmatrix} a & b \\ c & d \end{smallmatrix} ) =
(\begin{smallmatrix} \ast & \ast \\ 0 & \ast \end{smallmatrix} )
\mod{p} \}$ by the Poisson summation formula. In 1954, Eichler
showed that the $H(H-1)$ cusp forms
$$\theta_{ij}(z) - \theta_{1j}(z), \quad 2 \leq i \leq H, 1 \leq j \leq H, $$ {\em
span} the vector space $S_2 (\Gamma_0(p))$ of cusp forms of weight
$2$ for the group $\Gamma_0(p)$.
 \noindent In 1941, Deuring determined, for $E$ a supersingular
elliptic curve over $\Fp$, that $\End_{\overline{\F}_p}(E)$ is
isomorphic to a maximal order in the quaternion algebra
$B_{p,\infty}$ over $\Q$. \noindent Using e.g., the
$\gerA$-transform of Serre, one can show that there is a bijection
between left ideal classes $[I_1], \dots, [I_H]$ of
$\End_{\overline{\F}_p}(E)$ and isomorphism classes of
supersingular elliptic curves $E_1, \dots, E_H$ over
$\overline{\F}_p$, given functorially by the tensor map
$$[I_i] \mapsto [E \otimes_{\End_{\Fp}(E)} I_i],$$
where the brackets $[-]$ represent the respective isomorphism
classes.

\noindent We are now in position to give a geometric
interpretation of Eichler's original Basis Problem in terms of
modules of isogenies of supersingular elliptic curves.

\begin{prop}
The span of the theta series coming from the modules
$$\Hom(E_i,E_j) \cong I_j^{-1}I_i$$ equipped with the quadratic
degree map, includes the vector space $S_2(\Gamma_0(p))$.
\end{prop}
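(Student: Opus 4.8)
The plan is to deduce the statement from Eichler's 1954 theorem recalled above; the only point genuinely requiring proof is that the \emph{geometric} theta series attached to the lattice $(\Hom(E_i,E_j),\deg)$ coincides with the \emph{arithmetic} theta series $\theta_{ij}$ attached to $(I_j^{-1}I_i,Q_{ij})$. Granting this, every cusp form $\theta_{ij}-\theta_{1j}$ ($2\leq i\leq H$, $1\leq j\leq H$) lies in the span of the $\Hom(E_i,E_j)$-theta series, and by Eichler these differences already span $S_2(\Gamma_0(p))$; hence the span in question contains $S_2(\Gamma_0(p))$. It is genuinely an inclusion only: the full span also detects the Eisenstein contribution carried by the leading terms of the $\theta_{ij}$, which the differences kill.

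To produce the required identification, set $\calO=\End_{\Fp}(E)$, a maximal order in $B_{p,\infty}$ by Deuring's theorem, and use the $\gerA$-transform to write $E_i\cong E\otimes_{\calO}I_i$. Functoriality of the tensor construction yields a natural isomorphism of $\Z$-lattices $\Hom(E_i,E_j)\cong\Hom_{\calO}(I_i,I_j)$, and the latter is, up to the usual dictionary between left and right $\calO$-ideal classes, the fractional ideal denoted $I_j^{-1}I_i$ above. It remains to compare the quadratic structures. Since $\deg$ is multiplicative ($\deg(\psi\phi)=\deg(\psi)\deg(\phi)$) and restricts on each $\End(E_k)$ to the reduced norm of that maximal order, multiplicativity forces $\deg$ on $\Hom(E_i,E_j)$ to equal $c_{ij}\cdot\Norm(\cdot)$ for some positive rational constant $c_{ij}$; comparing covolumes (equivalently, discriminants) of $(\Hom(E_i,E_j),\deg)$ and $(I_j^{-1}I_i,\Norm)$ then pins down $c_{ij}=\Norm(I_j^{-1}I_i)^{-1}$, so that $\deg$ \emph{is} $Q_{ij}$ under the above identification.

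Once $(\Hom(E_i,E_j),\deg)\cong(I_j^{-1}I_i,Q_{ij})$ is established, the two theta series agree coefficient by coefficient, and the Poisson summation formula places them in $M_2(\Gamma_0(p))$; Eichler's theorem then concludes the argument. I expect the main obstacle to be this normalization step: one must track the left/right module conventions correctly through the $\gerA$-transform so as to land on $I_j^{-1}I_i$ and not on a conjugate or inverse of it, and one must pin down $c_{ij}=\Norm(I_j^{-1}I_i)^{-1}$, which rests on the classical fact that the degree form on the endomorphism ring of a supersingular elliptic curve is precisely the reduced norm of the corresponding maximal order in $B_{p,\infty}$.
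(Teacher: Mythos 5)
Your proposal is correct, and its skeleton is exactly the deduction the paper intends: Eichler's 1954 theorem says the differences $\theta_{ij}-\theta_{1j}$ span $S_2(\Gamma_0(p))$, so once the geometric lattice $(\Hom(E_i,E_j),\deg)$ is identified with $(I_j^{-1}I_i,Q_{ij})$ (via Deuring and the $\gerA$-transform bijection), the span of the geometric theta series contains $S_2(\Gamma_0(p))$; the paper treats this identification as classical and does not re-prove it in the introduction. Where you diverge is in how the normalization constant is pinned down. Your route is: multiplicativity of $\deg$ forces $\deg=c_{ij}\Norm$ on $\Hom(E_i,E_j)\otimes\Q\cong B_{p,\infty}$, and then a covolume/discriminant comparison gives $c_{ij}=\Norm(I_j^{-1}I_i)^{-1}$. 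The paper's own mechanism (carried out in the Hilbert-case generalization, Lemma \ref{112} together with Corollary \ref{strongapprox} and Proposition \ref{indeterminacy}) is instead to embed $\Hom(A_1,A_2)$ into $\End(A_1)$ by composing with a fixed (dual) isogeny, compute $\Norm(j_{\psi}(\phi))=||\psi^t||\cdot||\phi||$, and then use the existence of two isogenies of coprime degree to identify the reduced norm of the ideal with the gcd of the degree values; this pins the constant without ever needing the discriminant of the Hom-lattice. Your discriminant comparison does work, but note that it requires knowing $\operatorname{disc}(\Hom(E_i,E_j),\deg)=p^2$ \emph{independently} of the identification you are trying to establish --- this is not a formal consequence of $\deg=\Norm$ on $\End(E)$ alone, and is usually obtained by a local computation (Tate modules at $\ell\neq p$, Dieudonn\'e modules at $p$) or, equivalently, by the same coprime-degree-isogeny fact the paper invokes. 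Spelling that local step out would close the only soft spot in your argument; otherwise the two normalizations buy the same thing, with the paper's gcd argument being the more self-contained of the two.
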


\noindent
 The bulk of the paper deals with the generalization
of the above geometric interpretation to Hilbert modular forms, by
using modules of isogenies of superspecial points on the Hilbert
moduli space of dimension $g$, endowed naturally with a quadratic
structure via a so-called $\calO_L$-degree map coming from the
polarizations. Recall that for a base scheme $S$, the Hilbert
moduli space classifies abelian varieties $A/S$ with real
multiplication by $\calO_L$ satisfying the Rapoport condition. Let
$k$ be an algebraically closed field $k$ of characteristic $p$. An
abelian variety $A/k$ is \emph{superspecial} if and only if $A
\cong_{/k} E^g$, for $E$ some supersingular elliptic curve defined
over $k$. By a theorem of Deligne (see \cite[Thm. 3.5]{Shioda}),
there is a unique superspecial abelian variety for $g \geq 2$
(this is of course false in general for $g=1$), and it follows
that the superspecial locus on the Hilbert moduli space is finite.

\noindent We state our geometric reinterpretation of the classical
Eichler Basis Problem for Hilbert modular forms, in terms of
$\calO_L$-modules of $\calO_L$-isogenies of superspecial abelian
varieties with $\calO_L$-action succinctly (it follows from
results of Section \ref{leftidealclasses} and Cor.
\ref{basisEichler}):
\begin{thm} Let $h^+(L)=1$, and $p$ unramified in $L$. Let $A_m, A_n$ run through all superspecial points of the
Hilbert moduli space over $\Fp$. The quadratic modules
$\Hom_{\calO_L}(A_m,A_n)$ equipped with the $\calO_L$-degree map
give all quadratic forms of level $p$ attached to $B_{p,\infty}
\otimes L$. It follows that the space $S^{new}_{2}(\Gamma_0((p)))$
is contained in the span of the theta series stemming from the
quadratic modules $\Hom_{\calO_L}(A_m,A_n)$.
\end{thm}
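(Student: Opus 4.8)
The plan is to split the statement into a geometric half and an arithmetic half and to glue them through a single dictionary entry: the identification of the $\calO_L$-degree form on $\Hom_{\calO_L}(A_m,A_n)$ with a normalized reduced-norm form on a fractional ideal of a maximal order in the quaternion algebra $\calB := B_{p,\infty}\otimes_\Q L$ over $L$. Concretely, everything reduces to (a) the classification of the superspecial locus of the Hilbert moduli space in terms of left ideal classes of such a maximal order, carried out in Section~\ref{leftidealclasses}, and (b) the Eichler basis theorem for Hilbert modular forms recorded in Cor.~\ref{basisEichler}; the real work is in making (a) precise enough that its output is literally ``all quadratic forms of level $p$ attached to $\calB$''.

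First I would determine the relevant order. Fix a superspecial point $A_0$ of the Hilbert moduli space over $\Fp$. Since $p$ is unramified and $g\geq 2$ (the case $g=1$ recovering the classical Proposition above), Deligne's uniqueness theorem (\cite[Thm.~3.5]{Shioda}) gives $A_0\cong E^g$ with $E$ supersingular, whence $\End(A_0)\otimes\Q\cong M_g(B_{p,\infty})$; the real multiplication embeds $\calO_L$ into $M_g(\Z)\subset\End(A_0)$ via the regular representation, and taking centralizers identifies $\End_{\calO_L}(A_0)\otimes\Q$ with $B_{p,\infty}\otimes_\Q L=\calB$, a totally definite quaternion algebra over $L$. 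One then checks that $\calO_\calB:=\End_{\calO_L}(A_0)$ is a maximal order: away from $p$ this is automatic since $A_0$ has no extra level structure, and at a prime $\gerp\mid p$ it is the content of the local analysis of the superspecial Dieudonné module equipped with its $\calO_L\otimes_\Z\Z_p$-action. This is the Deuring half of the classical picture, reproved over $L$ in Section~\ref{leftidealclasses}.

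Next, using the Hilbert-modular Serre tensor construction $I\mapsto A_0\otimes_{\calO_\calB}I$, I would establish a bijection between the left $\calO_\calB$-ideal classes and the isomorphism classes of superspecial points of the Hilbert moduli space, with inverse sending $A_m$ to the $(\End_{\calO_L}(A_m),\calO_\calB)$-bimodule $\Hom_{\calO_L}(A_0,A_m)$, so that $A_m\leftrightarrow I_m$. Here $h^+(L)=1$ enters twice: it guarantees that $A_0\otimes_{\calO_\calB}I$ admits a (unique up to totally positive scaling) $\calO_L$-polarization satisfying the Rapoport condition, so that it genuinely defines a point of the Hilbert moduli space; and, conversely, that no superspecial point is missed, i.e.\ the narrow class group contributes no further components. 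Composing isogenies then yields $\Hom_{\calO_L}(A_m,A_n)\cong I_n^{-1}I_m$, and one must check that under this isomorphism the $\calO_L$-degree map matches $x\mapsto\Norm(x)/\Norm(I_n^{-1}I_m)$: this is where the Rosati involution of the polarization is matched with the canonical involution of $\calB$, and where one reads off that the resulting $\calO_L$-valued quaternary form is totally positive definite of the expected level. Letting $m,n$ range over all superspecial points and invoking the bijection, the quadratic modules $\Hom_{\calO_L}(A_m,A_n)$ realize exactly the normalized reduced-norm forms of fractional ideals of $\calO_\calB$ — that is, all quadratic forms of level $p$ attached to $\calB$.

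Finally, each such form has an associated Hilbert theta series of parallel weight $2$ and level $\Gamma_0((p))$, and by Cor.~\ref{basisEichler} — which packages the relevant case of the Jacquet--Langlands correspondence, equivalently the theory of Brandt matrices over $L$ — the span of these theta series contains $S^{new}_{2}(\Gamma_0((p)))$; combined with the preceding paragraph this gives the theorem. The main obstacle, I expect, is the third step: proving that the tensor construction is surjective onto the entire superspecial locus and that the $\calO_L$-degree map corresponds precisely to the normalized reduced norm, since this is exactly the point at which the polarization data, the Rapoport condition, and the hypothesis $h^+(L)=1$ must all be reconciled at once; by comparison the endomorphism-ring computation and the appeal to Cor.~\ref{basisEichler} are routine once this dictionary is in place.
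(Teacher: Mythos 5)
Your outline follows the paper's architecture (Serre tensor bijection between ideal classes and superspecial points as in Theorem~\ref{bijection}, the comparison of the $\calO_L$-degree with the normalized reduced norm as in Lemma~\ref{112} and Proposition~\ref{indeterminacy}, Eichler's theta theorem, and the Jacquet--Langlands input of Corollary~\ref{basisEichler}), but it contains one genuine error that breaks the chain: you assert that $\calO_\calB=\End_{\calO_L}(A_0)$ is a \emph{maximal} order of $B_{p,L}$, with the local statement at $\gerp\mid p$ attributed to the Dieudonn\'e-module analysis. That analysis (Section~\ref{precedante}) gives the opposite conclusion: for a superspecial point of the Hilbert moduli space (i.e.\ satisfying the Rapoport condition), $\End_{\calO_L}(A)$ is an \emph{Eichler order of level $p$}. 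Locally it is maximal only at the primes $\gerp\mid p$ with $f(\gerp/p)$ odd, where $B_{p,L}\otimes L_{\gerp}$ is a division algebra; at primes with $f(\gerp/p)$ even the algebra splits and the local order is the non-maximal order $\Gamma_0(p)_{\gerp}\subseteq M_2(\calO_{L_{\gerp}})$. The genuinely maximal orders are realized instead by the exotic ``level one'' abelian varieties $A^{+}$ constructed in Section~\ref{precedante}, which fail the Rapoport condition and hence are \emph{not} points of the Hilbert moduli space (they feed into Corollary~\ref{levelonetheta}, not into this theorem).

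This is not a cosmetic slip, because the theorem's content hinges on the level. If the order were maximal, the normalized norm forms on its ideals would have level $d(B_{p,L})$ (the product of the primes over $p$ of odd residue degree), which is in general a proper divisor of $(p)$; you would then obtain neither ``all quadratic forms of level $p$'' nor theta series of level $\Gamma_0((p))$, and Corollary~\ref{basisEichler} --- which requires theta series attached to left ideals of an Eichler order of level exactly $\gerN=(p)$ --- could not be invoked to capture all of $S_2^{new}(\Gamma_0((p)))$; the forms new at the split primes above $p$ would be missed. The fix is simply to replace ``maximal order'' by ``Eichler order of level $(p)$'' throughout your second and third steps: the bijection of Theorem~\ref{bijection} and the norm comparison (via Corollary~\ref{strongapprox} and Lemma~\ref{112}) are stated and proved in the paper for this Eichler order, and with that correction your argument coincides with the paper's.
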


\noindent We streamline and improve slightly upon the results of
\cite{these}. In particular, we demonstrate that this geometric
reinterpretation of the Eichler Basis Problem in terms of abelian
varieties is possible even for levels not found on the Hilbert
moduli space. That is, Cor. \ref{levelonetheta} and Cor.
\ref{basisEichler} imply that the space $S_{2}(\Gamma_0(1))$ is
contained in the span of theta series stemming from quadratic
modules $\Hom_{\calO_L}(A,A')$, for suitable abelian varieties
$A,A'$ with RM with maximal endomorphism orders.

The paper is organized as follows. In Section 2, we prove a
variant of Tate's theorem for supersingular abelian varieties with
real multiplication. In Section 3, we review some well-known
material on Dieudonn\'e modules and give an ad hoc proof of a very
special case of the classification theorem for those modules. In
Section 4, we compute the orders of endomorphisms of the
corresponding abelian varieties which turn out to be superspecial.
In Section 5, we explain the link between the arithmetic of
hereditary orders in quaternion algebras and the geometry of
abelian varieties with real multiplication. In Section 6, we
recall how the Eichler Basis Problem for Hilbert modular forms of
squarefree level follows from the Jacquet-Langlands
correspondence.

\section{A variant of a theorem of Tate} Let $p$ be a rational
prime and let $B_{p, \infty}$ be the rational quaternion algebra
ramified only at $p$ and $\infty$. Let $L$ be a totally real field
of degree $g$ over $\Q$. We suppose throughout that $p$ is
\emph{unramified} in $L$. We let $B_{p, L}$ denote the quaternion
algebra $B_{p, \infty} \otimes_\Q L$ over~$L$.

Let $A$ be an abelian variety over a perfect field $k$ of
characteristic $p$. For a rational prime $\ell$, we let
$$T_{\ell}(A) =
\begin{cases} \ilim A[\ell^n] & \ell \neq p
\\ \D(A) & \ell = p. \end{cases}$$
Here $\D(A)$ denotes the covariant Dieudonn\'e module. Recall that
a Dieudonn\'e module is a left $W(k)[F, V]$-module, free of finite
rank over the Witt vectors $W(k)$ such that $\D/F\D$ has finite
length over $W(k)$. In fact, for $\D(A)$ this length is precisely
$\dim(A)$. Recall also that the operators $F, V$ satisfy the
relations $FV = VF = p, F\lambda = \lambda^p F, \lambda V = V
\lambda^p$ for $\lambda \in W(k)$. The morphisms between
$T_\ell(A_1)$ and $T_\ell(A_2)$ are always $\Z_\ell$-linear if
$\ell \neq p$, and $W(k)[F, V]$-linear if $\ell = p$. If $A$ is
defined over a scheme $S$, we use the notation $\End(A)$ to denote
its endomorphisms over $S$; we emphasize this by using the
notation $\End(A/S)$. The notation $\End^0(A)$ stands for $\End(A)
\otimes_\Z \Q$.

\ \noindent We shall consider the following type of abelian
variety $A$ with additional endomorphism structure:

{\bf Real Multiplication (RM):} The abelian variety $A$ over $\Fp$
is supersingular, and is equipped with RM by $\iota: \calO_L
\hookrightarrow \End(A)$. \noindent By \cite[Lemma 6]{Chai}, the
centralizer $\Cent_{\End^0(A)}(L) \cong B_{p,L}$, and thus
$\Cent_{\End(A)}(\calO_L)$ is isomorphic to an order in $B_{p,L}$.

\noindent Further on, we will impose stringent requirements on the
abelian variety $(A,\iota)$: in particular, $A$ will be
superspecial and the Eichler order $\End_{\calO_L}(A)$ will be of
prescribed level. Otherwise explicitly mentioned, we consider
abelian varieties that satisfy Rapoport's condition (see
\cite{Rapoport}) or equivalently (since $p$ is unramified, see
\cite[Cor. 2.9]{DP}), the Deligne-Pappas condition (cf.
\cite{DP}). Recall that those are moduli conditions: $(A,\iota)$
over $S$ satisfies the Rapoport condition if the Lie algebra of
$A$ is a locally free $\calO_L \otimes_{\Z} \calO_S$-module of
rank $1$. We recall that the Hilbert moduli space $\calM$
decomposes as: $\calM = \bigsqcup_{(\gerA, \gerA^+) \in Cl^+(L)}
\calM_{(\gerA, \gerA^+)}$, where $(\gerA,\gerA^+)$ is an
$\calO_L$-module with a notion of positivity. Under the hypothesis
that the narrow class number $h^+(L)$ is $1$, $\calM =
\calM_{(\calO_L,\calO_L^+)}$, the component of principally
polarized points (see \cite[\S 2.6]{DP}). Denote by $A^t$ the dual
abelian variety. Recall that a principal polarisation $\lambda$ of
an abelian scheme $A$ is a symmetric isomorphism: $\lambda: A
\overset{\cong}{\arr} A^t$, coming from an ample line bundle on
geometric points; we require the polarisation to be
$\calO_L$-linear. Denote by $\Hom_{\calO_L}(A,A^t)^{sym}$ the
invertible $\calO_L$-module of  symmetric $\calO_L$-linear
homomorphisms from $A$ to $A^t$. If $A$ over a field $k$ satisfies
the Rapoport condition and $\Hom_{\calO_L}(A,A^t)^{sym} \cong
\calO_L$, then principal $\calO_L$-polarisations are in bijection
with $\calO_L^{\times,+}$. Furthermore, under $h^+(L)=1$,
$\calO_L^{\times,+} = (\calO_L^{\times})^2$, so if $\lambda_1$ and
$\lambda_2$ are principal $\calO_L$-polarisations, $(A, \iota,
\lambda_1) \cong (A, \iota, \lambda_2)$ as polarized
$\calO_L$-abelian varieties, by multiplication by a suitable unit.
We point out that $h^+(L)=1$ by itself does not imply that all
abelian varieties with RM over $\Fp$ admit a principal
polarisation. On the other hand, a principally polarized abelian
variety with RM automatically satisfies the Rapoport condition.

We start our analysis by establishing a variant of Tate's theorem
on endomorphisms of abelian varieties taking into account the real
multiplications. This is well-known but we provide a proof for
completeness.

\begin{thm} \label{Tatethm}
Let $A_i, i = 1, 2$ be supersingular abelian varieties over $\Fp$
with RM by $\calO_L$ as above, $\iota_i: \calO_L \arr
\End_{\Fp}(A_i)$. Then for $\ell \neq p$,
$$\Hom_{\calO_L}(A_1,A_2) \otimes \Z_{\ell} \cong \Hom_{\calO_L\otimes \Z_\ell}(T_{\ell}(A_1),
T_{\ell}(A_2)),$$ and at $\ell = p$,
$$\Hom_{\calO_L}(A_1,A_2) \otimes \Z_p \cong \Hom_{\calO_L \otimes
W(k)[F,V]}(\D(A_1),\D(A_2)).$$
\end{thm}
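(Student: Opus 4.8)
The plan is to deduce the $\calO_L$-equivariant statement from Tate's classical theorem on homomorphisms of abelian varieties over finite fields, by taking suitable invariants. I would begin from Tate's theorem, which in its covariant Dieudonné formulation states that for $A_1, A_2$ abelian varieties over $\Fp$ (or any finite field) and $\ell \neq p$ the natural map
$$\Hom(A_1,A_2) \otimes \Z_\ell \arr \Hom_{\Z_\ell}(T_\ell(A_1), T_\ell(A_2))$$
is an isomorphism, while at $\ell = p$ one has
$$\Hom(A_1,A_2) \otimes \Z_p \arr \Hom_{W(k)[F,V]}(\D(A_1),\D(A_2)),$$
an isomorphism as well. (Strictly, Tate proves these over finite fields; since everything in sight is of finite type and the supersingular $A_i$ with RM descend to some $\F_q$, one can work over $\F_q$ and the statement over $\Fp = \overline{\F}_p$ follows by passing to the limit over finite extensions, noting that $\Hom$ does not grow once all endomorphisms are defined. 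I would remark on this descent at the outset.)

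The key observation is that $\Hom_{\calO_L}(A_1,A_2)$ is by definition the subgroup of $f \in \Hom(A_1,A_2)$ commuting with the two $\calO_L$-actions, i.e. $f \circ \iota_1(a) = \iota_2(a) \circ f$ for all $a \in \calO_L$. Equivalently, letting $\calO_L$ act on the abelian group $\Hom(A_1,A_2)$ by $a \cdot f := \iota_2(a)\circ f - f \circ \iota_1(a)$ — more precisely, viewing $\Hom(A_1,A_2)$ as a module over $\calO_L \otimes_\Z \calO_L$ via $(a\otimes b)\cdot f = \iota_2(a)\circ f \circ \iota_1(b)$ — the submodule $\Hom_{\calO_L}(A_1,A_2)$ is the equalizer of the two structure maps $\calO_L \rightrightarrows \calO_L\otimes_\Z\calO_L \to \End(\Hom(A_1,A_2))$. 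The same description applies verbatim on the Tate-module / Dieudonné side: $\Hom_{\calO_L\otimes\Z_\ell}(T_\ell(A_1),T_\ell(A_2))$ is the analogous equalizer inside $\Hom_{\Z_\ell}(T_\ell(A_1),T_\ell(A_2))$, and likewise at $p$ with $W(k)[F,V]$ in place of $\Z_\ell$. Since tensoring with the flat $\Z$-module $\Z_\ell$ (resp. $\Z_p$) is exact and commutes with forming this equalizer, and since Tate's map is $\calO_L\otimes_\Z\calO_L$-equivariant (the actions on both sides are induced by composing with $\iota_i$, which are compatible with the functors $T_\ell$, $\D$), the isomorphism of Tate restricts to an isomorphism on the equalizers. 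This gives both displayed isomorphisms at once.

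Concretely I would carry this out in three steps. First, record Tate's theorem over $\F_q$ in both the $\ell\neq p$ and $\ell = p$ forms and reduce to that case by descent. Second, exhibit the $\calO_L\otimes_\Z\calO_L$-module structures on source and target and verify that Tate's comparison map is equivariant — this is immediate since $T_\ell$ and $\D$ are additive functors, so $T_\ell(\iota_i(a))$ (resp. $\D(\iota_i(a))$) provides the action and functoriality gives compatibility. Third, identify $\Hom_{\calO_L}$ (resp. $\Hom_{\calO_L\otimes\Z_\ell}$) with the kernel of $f \mapsto (a \mapsto \iota_2(a)f - f\iota_1(a))$, note this kernel is computed by a left-exact functor, and conclude by flatness of $\Z_\ell / \Z_p$ that taking the kernel commutes with $-\otimes_\Z\Z_\ell$; the supersingularity hypothesis is not actually needed for the comparison itself, only (implicitly, via the earlier discussion) to guarantee $\calO_L$ embeds in $\End_{\Fp}(A_i)$ in the first place. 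The main obstacle is the bookkeeping at $\ell = p$: one must make sure the $\calO_L \otimes_\Z W(k)$-action on $\D(A_i)$ is the one commuting with $F$ and $V$ (it is, since $\iota_i(a)$ is an honest endomorphism of the abelian variety, hence $\D(\iota_i(a))$ is $W(k)[F,V]$-linear), so that "$\calO_L$-linear homomorphism of Dieudonné modules" and "equalizer of the two $\calO_L$-actions inside $W(k)[F,V]$-linear homomorphisms" genuinely coincide; once that is checked the argument is the same formal diagram chase as for $\ell \neq p$.
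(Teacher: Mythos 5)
There is a genuine gap, and it sits exactly where the statement is delicate. Tate's theorem over a finite field $\F_q$ identifies $\Hom_{\F_q}(A_1,A_2)\otimes\Z_\ell$ with the \emph{Galois-equivariant} homomorphisms $\Hom_{G}(T_\ell(A_1),T_\ell(A_2))$, $G=\Gal(\Fp/\F_q)$, not with all $\Z_\ell$-linear ones. The theorem you are asked to prove has no Galois condition on the right-hand side, so after descending to $\F_q$ and "passing to the limit" you are still left with the union over $q$ of the $G$-invariants, which in general is strictly smaller than $\Hom_{\Z_\ell}(T_\ell(A_1),T_\ell(A_2))$: for an ordinary elliptic curve over $\Fp$, $\End(E)\otimes\Z_\ell$ has $\Z_\ell$-rank $2$ while $\End_{\Z_\ell}(T_\ell E)\cong M_2(\Z_\ell)$ has rank $4$, and the same discrepancy persists after imposing $\calO_L$-linearity for an ordinary point of the Hilbert moduli space. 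So your remark that "the supersingularity hypothesis is not actually needed for the comparison itself" is precisely the mistake: without it the statement is false, and your argument, which never removes the Galois invariance, would purport to prove it anyway.

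The missing step is the one the paper devotes the second half of its proof to: because $A$ is supersingular, $\End(A/\F_q)\otimes\Q_\ell\cong M_{2g}(\Q_\ell)$ for $q$ large enough, and since the $G$-action on $T_\ell(A)\otimes\Q_\ell$ commutes with this full matrix algebra it must act by scalars; hence every $\Z_\ell$-linear (a fortiori every $\calO_L\otimes\Z_\ell$-linear) map of Tate modules is automatically $G$-equivariant, and the invariants can be dropped. The analogous point is needed at $\ell=p$ for Dieudonn\'e modules over $W(\Fp)$. The rest of your argument is fine: exhibiting $\Hom_{\calO_L}$ as an equalizer (kernel of $f\mapsto \iota_2(a)f-f\iota_1(a)$) and commuting it with the flat base change $-\otimes_\Z\Z_\ell$ is a clean way to handle the $\calO_L$-linearity, and it parallels the paper's comparison of its modules $M_1$ and $M_2$ (done there by a rank count plus torsion-freeness). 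But as written the proposal proves only the descended, Galois-invariant version of the statement, not the theorem over $\Fp$; you must insert the supersingularity argument to bridge that gap.
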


\begin{proof}
We prove only the claim for $\ell \neq p$, as the proof is similar
for $\ell = p$. \noindent Choose a finite extension $\F_q\supseteq
\F_p$ such that all endomorphisms are defined over $\F_q$. We can
assume $A_1 = A_2 = A$ by an easy reduction (as in the proof of
the original theorem). Let $G := \Gal(\Fp/\F_q)$. Tate's theorem
(\cite{Tate}, \cite{ WaterhouseMilne}) implies that:
$$ \End(A/\F_q) \otimes \Z_{\ell} \cong
\End(T_{\ell}(A))^G.$$ Let $M_1 := \{ f \in \End(A/\F_q) \otimes
\Z_{\ell} | fr = rf, \forall r \in \calO_L \}$, which we may
identify with  $\{ f \in \End(T_{\ell}(A))^G | f T_{\ell}(r) =
T_{\ell}(r)f, \forall r \in \calO_L \}$, by Tate's theorem. Let
$M_2 = \{ f \in \End(A/\F_q) | fr = rf \} \otimes \Z_{\ell}.$ It
is clear that $M_2 \subseteq M_1$ and that $M_1/M_2$ is
torsion-free. By comparing ranks, the modules coincide. \noindent
We next use the fact that the endomorphism ring is large to
eliminate the mention of $G$ in the statement of the theorem. We
know that the action of $G$ commutes with the action of $\End(A)
\otimes_{\Z} \Q_{\ell}$ in $\End(T_{\ell}(A))\otimes_\Z \Q$. By
assumption, $A$ is supersingular and therefore $\End(A/\F_q)
\otimes \Q_{\ell} \cong M_{2g}(\Q_{\ell})$; the action of $G$ is
thus given by scalars in $\Q_\ell$ and thus $\End_{\calO_L}(A)
\otimes \Z_{\ell} \cong \End_{\calO_L}(T_{\ell}(A))$.
\end{proof}

\section{Classification of (some) Dieudonn\'e modules with multiplications}


Dieudonn\'e modules have been classified up to isomorphism by
Manin \cite{Manin}. For any fixed $g$, it turns out that there is
a unique superspecial Dieudonn\'e module of rank $2g$. We consider
here Dieudonn\'e modules with RM and impose some additional
conditions. These conditions are natural from the point of view of
abelian varieties. Note that we rely crucially on the arithmetic
assumption that the prime $p$ is unramified. For $p$ ramified in
$L$, the ideas involved are more subtle as there is more than one
isomorphism class of superspecial Dieudonn\'e modules with RM (see
\cite{these, NicoleRamified}).

Let $k$ be a perfect field of characteristic $p$. We concentrate
on rank $2g$ Dieudonn\'e modules with RM.

\begin{dfn}
 A Dieudonn\'e module $\D$ with RM is an $(\calO_L \otimes W(k))[F,V]$-module of rank~$2$ over $\calO_L
\otimes W(k)$, where $F,V$ commute with $\calO_L$.
\end{dfn}

Assume from now on that $k$ is algebraically closed. In order to
classify Dieudonn\'e modules with RM, we introduce useful
decompositions. A Dieudonn\'e module $\D$ with RM decomposes
according to
 the decomposition of $p$ in $\ol$:
$$ \D = \oplus_{\gerp \vert p} \D_\gerp,$$
where $\D_\gerp$ are Dieudonn\'e modules with
$\calO_{L_\gerp}$-action. Furthermore, letting $\Phi_\gerp =
\Hom(\ol_\gerp, W(k))$, each $\D_\gerp$ decomposes as an
$\ol\otimes W(k)$-module as $\oplus_{\phi\in \Phi_\gerp}
\D_\gerp(\phi)$, where $\D_\gerp(\phi)$ is the summand where
$\calO_{L_{\gerp}}$ acts via $\Phi_{\gerp}$. Altogether, we obtain
the decomposition
\[ \D \cong \oplus_{\gerp \vert p}\oplus_{\phi \in \Phi_\gerp}
W(k)_{\phi}^2, \] as $\ol\otimes W(k)$-modules.

\noindent Recall that an $\calO_L$-linear polarisation $A
\overset{\lambda}{\arr} A^t$ with degree prime to $p$ defines a
principal quasi-polarisation with RM i.e., a skew-symmetric form
$$\D(A) \times \D(A) \arr W(k) \otimes \calO_L,$$ which is a perfect
pairing over $W(k) \otimes \calO_L$, and such that $F$ and $V$ are
adjoint.

\noindent An important consequence of $p$ being unramified in $L$
is the following fact, whose use pervades the whole text. Recall
that a Dieudonn\'e module $\D$ is superspecial if $F\D = V\D$.

\begin{fact} \label{fact23}
Let $k$ be algebraically closed. Then there exists a unique
superspecial (principally quasi-polarized) Dieudonn\'e module with
RM.
\end{fact}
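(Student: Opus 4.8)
The plan is to establish existence and uniqueness separately, working with the decomposition $\D \cong \oplus_{\gerp \mid p}\oplus_{\phi \in \Phi_\gerp} W(k)_\phi^2$ recalled just above. First I would reduce to the case of a single prime $\gerp \mid p$, since the operators $F,V$ and the polarisation pairing respect the decomposition $\D = \oplus_\gerp \D_\gerp$ (the summands are $\calO_{L_\gerp} \otimes W(k)$-stable, and $F,V$ commute with $\calO_L$), so a superspecial structure on $\D$ is exactly a collection of superspecial structures on each $\D_\gerp$, and it suffices to treat one $\gerp$. Fix $\gerp$ with residue degree $f = f_\gerp$, so $\calO_{L_\gerp} \otimes W(k) \cong W(k)^{\Phi_\gerp}$ with $|\Phi_\gerp| = f$, and $F$ acts on the $W(k)$-Frobenius $\sigma$ cyclically permuting the $f$ idempotents $e_\phi$; concretely $F$ maps $\D_\gerp(\phi)$ into $\D_\gerp(\sigma\phi)$ and likewise $V$ maps $\D_\gerp(\phi)$ into $\D_\gerp(\sigma^{-1}\phi)$, each being a rank-$2$ free $W(k)$-module.

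For existence I would simply exhibit the module: take $\D_\gerp(\phi) = W(k)^2$ for each $\phi$, let $F\colon \D_\gerp(\phi) \to \D_\gerp(\sigma\phi)$ be $\sigma$-semilinear given by the matrix $\mat{p}{0}{0}{1}$ (in suitable bases), and $V$ the $\sigma^{-1}$-semilinear operator forced by $FV = VF = p$, namely $\mat{1}{0}{0}{p}$ going the other way. One checks $F\D = V\D$ directly: both equal the sublattice $\oplus_\phi (W(k)e_1 \oplus p W(k) e_2)$ up to reindexing, hence superspecial. The principal quasi-polarisation is the standard symplectic pairing on each $W(k)^2$ twisted so that $F$ and $V$ are adjoint; because $p$ is unramified the pairing can be taken perfect over $\calO_L \otimes W(k)$, which is the point where ramification would obstruct the construction (a perfect $\calO_L$-valued pairing would not exist if the different were nontrivial). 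That this $\D$ actually comes from a supersingular abelian variety with RM is guaranteed by Manin/Deligne, but for the \emph{fact} as stated only the module-theoretic existence is needed.

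For uniqueness, suppose $\D_\gerp$ is any superspecial rank-$2$ Dieudonn\'e module with $\calO_{L_\gerp}$-action. The key local computation: on the cyclic chain $\D_\gerp(\phi) \xrightarrow{F} \D_\gerp(\sigma\phi) \to \cdots$, composing all $f$ maps gives $F^f$ acting semilinearly on a single $W(k)^2$; superspeciality $F\D = V\D$ together with $FV = p$ forces $F^2 = p \cdot (\text{unit})$ on each graded piece, so after rescaling bases one may assume $F^2 = p$ on the relevant $W(k)^2$, i.e. the slopes are all $1/2$. Then a Hermite/Smith-normal-form argument over the (complete) discrete valuation ring $W(k)$ lets one choose bases of the $\D_\gerp(\phi)$ simultaneously so that each individual map $F\colon \D_\gerp(\phi) \to \D_\gerp(\sigma\phi)$ is $\mat{p}{0}{0}{1}$ up to a $W(k)$-unit change of basis; since $k$ is algebraically closed one can absorb the $\sigma$-twist (Lang's theorem / solvability of $x^{\sigma} = ux$ for units $u$) to put it in the exact normal form above, which is precisely the module constructed for existence. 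Reassembling over all $\phi$ and all $\gerp$ gives the global isomorphism; finally, since $h^+(L) = 1$ and (as recalled in the excerpt) principal $\calO_L$-polarisations on a fixed $(A,\iota)$ are acted on simply transitively by $\calO_L^{\times,+} = (\calO_L^\times)^2$, any two principal quasi-polarisations differ by an $\calO_L \otimes W(k)$-automorphism, so the polarized module is unique as well. The main obstacle I anticipate is the simultaneous diagonalisation of the cyclically-linked semilinear maps $F$ along the chain of embeddings $\Phi_\gerp$: one must be careful that the normal-form change of basis at $\D_\gerp(\phi)$ is compatible with the one forced at $\D_\gerp(\sigma\phi)$, and this is exactly where algebraic closedness of $k$ (to solve the resulting twisted unit equations) and the complete local structure of $W(k)$ are both used.
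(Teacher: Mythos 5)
Your existence construction does not produce a superspecial module, and this error propagates into the uniqueness half. With $F\colon \D_\gerp(\phi)\to\D_\gerp(\sigma\phi)$ given by $\diag(p,1)$ and $V$ by $\diag(1,p)$, the submodules $F\D$ and $V\D$ meet each graded piece in $pW(k)e_1\oplus W(k)e_2$ and $W(k)e_1\oplus pW(k)e_2$ respectively; these are different lattices, and ``equal up to reindexing'' is not the superspecial condition, which is an actual equality $F\D=V\D$ inside $\D$. In fact the module you wrote down is the \emph{ordinary} one: $F^{f}$ on $\D_\gerp(\phi)$ has elementary divisors $(p^{f},1)$, i.e.\ slopes $0$ and $1$, so it is not even supersingular. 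The same wrong normal form makes the uniqueness argument internally inconsistent: you correctly deduce from $F\D=V\D$ and $FV=p$ that $F^{2}\D_\gerp(\phi)=p\D_\gerp(\sigma^{2}\phi)$, but if every individual $F$ in one compatible chain of bases were $\diag(p,1)$, then $F^{2}$ would have elementary divisors $(p^{2},1)$, not $(p,p)$. Knowing the elementary divisors of each single arrow does not determine the module; what must be normalized is the whole cyclic chain, and the correct simultaneous normal form is the antidiagonal one, $F(e_1^{(i)})=e_2^{(i+1)}$, $F(e_2^{(i)})=p\,e_1^{(i+1)}$, i.e.\ the module $\D(E)\otimes_{\Z_p}\calO_{L_\gerp}$ for $E$ a supersingular elliptic curve.

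There is also a structural gap: you invoke the quasi-polarisation only at the very end, to compare two pairings on a fixed module, but the parenthetical ``principally quasi-polarized'' is what makes uniqueness true at all. When $f(\gerp/p)$ is even, $F\D=V\D$ alone does not pin down the module: the Dieudonn\'e module of $A^{+}$ in Section \ref{precedante}, where $F$ is alternately an isomorphism and $p$ times an isomorphism ($\phi_i\in\{0,2\}$), also satisfies $F\D=V\D$ yet is not isomorphic to the Rapoport one. It is excluded precisely because perfectness of the $\calO_L\otimes W(k)$-pairing together with adjointness of $F$ and $V$ gives $\langle F\D_\gerp(\phi),F\D_\gerp(\phi)\rangle=p\,W(k)$, forcing each $F\D_\gerp(\phi)$ to have colength $1$; any self-contained proof must use the pairing at this point. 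Your appeal to $h^{+}(L)=1$ for the polarized statement is out of place: it is a global hypothesis, not assumed in the Fact, and irrelevant to a purely local assertion about Dieudonn\'e modules. Note finally that the paper's own proof is a one-liner --- decompose along the primes above $p$ and quote the inert case from \cite[Thm 5.4.4]{GO} --- so a self-contained argument is a reasonable alternative route, but only after the repairs above.
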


\begin{proof}
Recall that Dieudonn\'e modules decompose according to primes;
since $p$ in unramified, the claim follows from the inert case
(see \cite[Thm 5.4.4]{GO}).
\end{proof}

\noindent Moreover, since for any abelian variety $A$ over an
algebraically closed field $k$, $T_{\ell}(A) \cong (\calO_L
\otimes \Z_{\ell})^2$ for $\ell \neq p$ (cf. \cite[Lem.
1.3]{Rapoport}), the behaviour of $A$ at the prime $p$ is pivotal.

\noindent From the decomposition of $\D$ according to primes, we
may in the rest of this section assume that $p$ is inert.
\noindent The Dieudonn\'e module $\D$ is then a $\calO_{L_{\gerp}}
\otimes_{\Z_p} W(k)$-module of rank two, where
$\calO_{L_{\gerp_i}}$ is unramified of degree $g$. \noindent Put
$R = \calO_{L_{\gerp}}$. Up to a choice of an embedding $\tau$ of
$R$ in $W(k)$, we can decompose the ring:
$$R \otimes W(k) = \oplus_{\tau \in \Hom(R, W(k))} W(k)_{\tau} = \oplus_{i=1}^g
W(k)_i,$$
$$ r \otimes \lambda \mapsto \Big( \tau(r)\lambda, \sigma \circ
\tau(r) \lambda, \dots, \sigma^{g-1} \circ \tau(r) \lambda
\Big),$$ where $\sigma$ denotes the Frobenius. To get a uniform
notation, put $\sigma_i := \sigma^{i-1} \circ \tau$. Then $$\Big(
\tau(r)\lambda, \sigma \circ \tau(r) \lambda, \dots, \sigma^{g-1}
\circ \tau(r) \lambda \Big) = \Big( \sigma_1(r)\lambda,
\sigma_2(r)\lambda, \dots, \sigma_g(r)\lambda \Big).$$ In turn,
this implies that in the decomposition $\D = \oplus_{i=1}^{g}
\D_i$, the action of $R$ is given by: $$r \star d_i = \sigma^{i-1}
\circ \tau (r) \cdot d_i = \sigma_i(r) d_i,$$ for $d_i \in \D_i$,
$r \in R$.

\noindent Note that $F: \D_i \arr \D_{i+1}, V : \D_{i+1} \arr
\D_{i}$, $FV=VF=p$ and $$p\D_{i+1} = F(V\D_{i+1}) \subseteq
F(\D_i) \subseteq \D_{i+1} \quad \mbox{ for all } i .$$

\noindent There are some constraints on how these pieces fit
together. In particular, $$ \oplus_i \dim_k (\D_{i+1}/F\D_i) =
g,$$ with $\phi_i := \dim_k (\D_{i+1}/F\D_i) \in \{0,1,2\}$.

\begin{dfn} We call $\Phi = \{ \phi_i \}$ an admissible set of
indices if $\phi = 1$ for all $i$ or if $\phi \in \{ 0,2 \}$ for
all $i$.
\end{dfn}

\begin{prop} \label{classification} Let $\Phi = \{ \phi_i \}$ be an admissible set of indices.
Let $\D = \oplus_{i=1}^g \D_i$ be a supersingular Dieudonn\'e
module with RM with invariants $\Phi$. Then $\D$ is uniquely
determined e.g., by the elementary divisors of the
$\sigma^g$-linear map $F^g: \D_1 \arr \D_1$.
\end{prop}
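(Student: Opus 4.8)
The plan is to prove the proposition by reducing the classification to a normal-form problem for the single $\sigma^g$-linear operator $F^g$ on $\D_1$, using the admissibility hypothesis to pin down all the intermediate lattices $\D_i$ in terms of $\D_1$. First I would treat the two admissible cases separately. In the case $\phi_i = 1$ for all $i$, the relations $p\D_{i+1} \subseteq F\D_i \subseteq \D_{i+1}$ together with $\dim_k(\D_{i+1}/F\D_i) = 1$ force $F\D_i$ to be an index-$p$ sublattice of $\D_{i+1}$ containing $p\D_{i+1}$, and symmetrically $V$ carries $\D_{i+1}$ to an index-$p$ sublattice of $\D_i$. Since $FV = p$, knowing $\D_1$ and the operator $F: \D_1 \to \D_2$ determines $\D_2 = p^{-1}V^{-1}(\text{something})$; more cleanly, I would argue that the chain $\D_1, \D_2, \dots, \D_g$ is recovered from $\D_1$ and the composite $F^g$ by taking successive "$F$-saturations": $\D_{i+1}$ is the smallest $W(k)$-lattice containing $F\D_i$ such that the eventual return map $F^g$ has the prescribed elementary divisors. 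In the case $\phi_i \in \{0,2\}$, the module is a direct sum of pieces on which $F$ is either an isomorphism $\D_i \xrightarrow{\sim} \D_{i+1}$ (when $\phi_i = 0$) or satisfies $F\D_i = p\D_{i+1}$ (when $\phi_i = 2$); again all the $\D_i$ are determined up to the global ambiguity captured by $F^g$ on $\D_1$.

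Next I would make precise the sense in which $F^g: \D_1 \to \D_1$ determines $\D$. The map $F^g$ is $\sigma^g$-linear on the free rank-two $W(k)$-module $\D_1$, and $W(k)$ is a complete DVR with perfect residue field, so by the standard theory of $\sigma^g$-linear maps (the "Dieudonn\'e" normal form, or equivalently the classification of isocrystals plus a lattice computation) $F^g$ is, after a change of $W(k)$-basis of $\D_1$, given by a diagonal matrix $\operatorname{diag}(p^{a}, p^{b})$ with $a \le b$; these are the elementary divisors. Supersingularity of $\D$ translates into $a = b$ (both slopes equal $1/2$ after accounting for the $g$ steps), so in fact $F^g = p^{a}\cdot u$ for a $\sigma^g$-linear automorphism $u$, and since $k$ is algebraically closed Lang's theorem lets us absorb $u$ into the basis. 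Thus $F^g$ is determined up to conjugacy by the single integer $a$, and I would check that $a$ is itself pinned down by $\Phi$ (it equals $\sum \lceil \phi_i/2 \rceil$ or a similar explicit count), so that in fact there is a \emph{unique} such $\D$ for each admissible $\Phi$ — consistent with Fact~\ref{fact23} in the parallel case $\phi_i \equiv 1$.

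Finally I would assemble the argument: given two supersingular Dieudonn\'e modules $\D, \D'$ with RM and the same admissible invariants $\Phi$, restrict an isomorphism $\D_1 \xrightarrow{\sim} \D_1'$ intertwining the two $F^g$'s (which exists by the previous paragraph), and then \emph{propagate} it through the chain: define the map on $\D_{i+1}$ by transporting $F\D_i$ and using that $\D_{i+1}$ is the unique lattice sitting between $F\D_i$ and its $p$-unsaturation with the correct colength. One checks this is well defined (the colength bookkeeping is exactly the $\phi_i$ data), $W(k)$-linear, commutes with $F$ and $V$ by construction, and commutes with the $R$-action since $R$ acts on $\D_i$ through the fixed embedding $\sigma_i$ and the propagation respects the indexing. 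The main obstacle I anticipate is the bookkeeping in this propagation step: one must verify that the intermediate lattices really are rigid (no extra moduli hidden between $F\D_i$ and $\D_{i+1}$), which in the $\phi_i = 1$ case uses that an index-$p$ overlattice of a rank-two $W(k)$-lattice is unique once one fixes which line mod $p$ it "thickens," and reconciling that local freedom across all $i$ with the single global invariant $F^g$ is where the real content lies. The $\phi_i \in \{0,2\}$ case is easier because there the $F$-maps are essentially forced (isomorphisms or multiplication by $p$), so I would dispatch it first as a warm-up.
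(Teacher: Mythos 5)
Your treatment of the $\phi_i \in \{0,2\}$ case is essentially the paper's argument: there $F:\D_i \arr \D_{i+1}$ is forced to be either an isomorphism or $p$ times an isomorphism, so normalized bases propagate along the chain and everything reduces to the elementary-divisor normal form of the $\sigma^g$-linear map $F^g$ on $\D_1$ (the paper quotes Stamm for this normal form). The genuine problem is your handling of the case $\phi_i = 1$ for all $i$. The paper does \emph{not} argue by lattice propagation there: it invokes Stamm's result that a supersingular module satisfying the Rapoport condition (which is exactly the all-$\phi_i=1$ case) is automatically superspecial, and then concludes by the uniqueness of the superspecial principally quasi-polarized Dieudonn\'e module with RM (Fact \ref{fact23}, resting on Goren--Oort). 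Your substitute --- recovering $\D_{i+1}$ from $F\D_i$ as a ``smallest lattice such that the return map $F^g$ has the prescribed elementary divisors'' --- is not a well-defined recipe and does not close the argument: $F\D_i$ has index $p$ in $\D_{i+1}$, and the index-$p$ overlattices of $F\D_i$ inside $\frac{1}{p}F\D_i$ form a $\P^1(k)$, so $\D_{i+1}$ is not rigid given $F\D_i$ alone. You yourself flag this rigidity as ``where the real content lies,'' which is precisely the point: that content is supplied in the paper by the cited supersingular-specific input (Stamm plus Fact \ref{fact23}), and your proposal leaves it unproven.

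There is also a concrete error in your normal-form step: supersingularity does \emph{not} force equal elementary divisors $a=b$ for $F^g$ on $\D_1$. Supersingularity only constrains $a+b=g$, and when $g$ is odd (which happens exactly in the all-$\phi_i=1$ case, since $\sum\phi_i=g$ forces $g$ even in the $\{0,2\}$ case) one cannot have $a=b$; for the superspecial module $\D(E)\otimes\calO_L$ with $p$ inert and $g$ odd one gets elementary divisors $p^{(g-1)/2}, p^{(g+1)/2}$, matching the paper's quoted normal form $\left(\begin{smallmatrix} 0 & 1 \\ p^g & 0 \end{smallmatrix}\right)$ rather than a scalar $p^{g/2}$ times a unit. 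So the reduction ``$F^g = p^a u$ and absorb $u$ by Lang's theorem'' breaks down in exactly the case your propagation argument was meant to cover. To repair the proposal, either import the paper's route (Stamm: supersingular $+$ Rapoport $\Rightarrow$ superspecial, then uniqueness) or replace the $a=b$ claim by the two-case normal form and prove directly that the chain of lattices in the $\phi_i\equiv 1$ situation is determined by that normal form --- which is a genuine argument, not bookkeeping.
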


\begin{proof}
\noindent We only give all the details of the proof supposing that
$\phi \in \{ 0,2 \}$ for all $i$. The other case occurs when the
Rapoport condition holds, and is shown in \cite[\S 3]{Stamm} to be
superspecial in which case the result follows by uniqueness (see
Fact \ref{fact23}).

\noindent Suppose that $\D'$ is a rank $2$ module over $W(k)$,
equipped with the action of a $\sigma^g$-linear operator that we
call $F^g$. By \cite[p.419]{Stamm}, if we pick a basis $\{ \alpha,
\beta \}$ for $\D'$, we can represent $F^g$ by a matrix $M$ with
elementary divisors i.e., according to whether $g$ is even or odd,
there exists a matrix $N \in \GL_2(W(k))$ such that
$$ N^{-1} M N^{\sigma^g} = \mat{p^\frac{g}{2}}{0}{0}{p^\frac{g}{2}} \mbox{ or } \mat
{0}{1}{p^g}{0}.$$

\noindent We now explain how to choose the bases in a normalized
way that is, starting from a basis $\{ \alpha_1, \beta_1 \}$ of
$\D_1$, we show how to define a basis for $\D_2$, and then for
$\D_3$, etc. back to the normalized map $F^g: \D_1 \arr \D_1$.

\noindent Consider the map $\D_i \overset{F}{\arr} \D_{i+1}$,
starting, say, with $i = 1$.
 \noindent If $\phi_{i+1} = 0$, then $F$ is an isomorphism, and for any basis
 $\{ \alpha_i, \beta_i \}$, define $\alpha_{i+1} := F(\alpha_i)$, and
 $\beta_{i+1} := F(\beta_i)$.
\noindent
 If $\phi_{i+1} = 2$, then $F$ is $p$ times an isomorphism, and for any basis
 $\{ \alpha_i, \beta_i \}$, define $\alpha_{i+1} := \frac{1}{p} F(\alpha_i)$, and
 $\frac{1}{p} \beta_{i+1} := F(\beta_i)$.
\noindent It follows from the construction that any change of
basis to $\{ \alpha_1, \beta_1 \}$ uniquely gives all $\ \{
\alpha_i, \beta_i \}$'s, and we conclude based on the uniqueness
of the normal presentation with elementary divisors.
\end{proof}

\section{Going global: Eichler orders}

\label{precedante}

In this section, we show how to construct abelian varieties with
real multiplication that are exotic from the point of view of
moduli. In particular, their endomorphism orders will be maximal
in $B_{p,L}$ e.g., level one may occur. Our starting point,
though, is the superspecial locus of the Hilbert moduli space.

We suppose that $A$ is a superspecial abelian variety with RM over
$k=\Fp$ satisfying the Rapoport condition i.e., the $\Lie(A)$ is a
free $\calO_L \otimes_{\Z} k$-module. Then, as we will see
shortly, $R = \Cent_{\End(A)}(\calO_L)$ is an Eichler order of
level $p$ in $B_{p,L}$. That is, $$ R \otimes_{\calO_L}
\calO_{L_{\gerp_i}} \cong
\begin{cases}  \mbox{ maximal order of } B_{p,L} \otimes
L_{\gerp_i} & f(\gerp_i/p) = 1 \mod{2} \\
 \mat{\ast}{\ast}{p\ast}{\ast} =: \Gamma_0(p)_{\gerp_i} \subseteq
M_2 (\calO_{\gerp_i}) & \mbox{ otherwise. } \end{cases}
$$

\noindent Recall that by local class field theory (see
\cite{Serre}), the condition $f(\gerp_i/p)$ odd implies that
$B_{p,L} \otimes L_{\gerp_i}$ is a division algebra.

\begin{prop} Let $A$ be a principally polarized superspecial abelian variety with RM over $\Fp$. The endomorphism order
$\End_{\calO_L}(A) := \Cent_{\End(A)}(\calO_L)$ is an Eichler
order of level $p$ in $B_{p,L}$.
\end{prop}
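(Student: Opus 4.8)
## Proof Proposal

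The plan is to reduce everything to a purely local computation, prime by prime, using the fact that $\End_{\calO_L}(A) = \Cent_{\End(A)}(\calO_L)$ becomes, after tensoring with $\Z_\ell$, the $\calO_L \otimes \Z_\ell$-linear endomorphisms of $T_\ell(A)$ by Theorem \ref{Tatethm}. Since an Eichler order of level $p$ is by definition the order whose completion at each $\gerp \mid p$ is the prescribed local order (maximal when $f(\gerp/p)$ is odd, $\Gamma_0(p)_\gerp$ when $f(\gerp/p)$ is even) and which is maximal at all $\ell \neq p$, it suffices to verify these local statements. At $\ell \neq p$ this is immediate: $T_\ell(A) \cong (\calO_L \otimes \Z_\ell)^2$ by the cited Lemma 1.3 of \cite{Rapoport}, so $\End_{\calO_L \otimes \Z_\ell}(T_\ell(A)) \cong M_2(\calO_L \otimes \Z_\ell)$, a maximal order.

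The substance is at $p$. Using Theorem \ref{Tatethm} again, $\End_{\calO_L}(A) \otimes \Z_p \cong \End_{\calO_L \otimes W(k)[F,V]}(\D(A))$, and the decomposition of $\D(A)$ according to the primes $\gerp \mid p$ reduces us, as in Section 3, to the inert case: $R = \calO_{L_\gerp}$ unramified of degree $f = f(\gerp/p)$, and $\D = \D_\gerp = \oplus_{i=1}^f \D_i$ a rank-two Dieudonn\'e module with RM. Because $A$ is superspecial, $\D$ is the unique superspecial Dieudonn\'e module with RM (Fact \ref{fact23}); moreover, since $A$ satisfies the Rapoport condition, $\Lie(A) = \D/F\D$ is free of rank one over $\calO_L \otimes k$, which forces $\phi_i = \dim_k(\D_{i+1}/F\D_i) = 1$ for every $i$. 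Thus I would take the explicit normal form for $\D$ in the $\phi_i = 1$ case from Proposition \ref{classification} (equivalently from \cite[\S 3]{Stamm}): a $W(k)$-basis $\{\alpha_i,\beta_i\}$ of each $\D_i$ with $F\alpha_i = \alpha_{i+1}$, $F\beta_i = p\beta_{i+1}$ (indices mod $f$, with the wrap-around incorporating a twist so that $F^f$ on $\D_1$ has elementary divisors $(1,p^f)$ or $(p^{f/2},p^{f/2})$ according to the parity of $f$).

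Then I would compute $\End_{\calO_L \otimes W(k)[F,V]}(\D)$ directly. An $\calO_L$-linear, $F$- and $V$-commuting endomorphism is determined by its restriction $\psi_1 \in \End_{W(k)}(\D_1)$, and the $F$-commutation propagates $\psi_1$ to all $\psi_i$ and imposes, after going around the cycle, the constraint that $\psi_1$ commute with the $\sigma^f$-linear operator $F^f$ on $\D_1$. Writing $\psi_1$ as a matrix in the normal basis, the commutation with the matrix $\left(\begin{smallmatrix} 0 & 1 \\ p^f & 0\end{smallmatrix}\right)$ (the odd case) cuts the would-be maximal order $M_2(\calO_{L_\gerp})$ down exactly to the maximal order of the division algebra $B_{p,L} \otimes L_\gerp$ — consistent with local class field theory, as noted in the excerpt; while in the even $f$ case the operator is scalar $p^{f/2}$ and imposes no extra condition, but there the Dieudonn\'e module has $F\D_i \ne \D_i$ alternately, and tracking which $\D_i$'s pick up the factor of $p$ pins $\psi_1$ to $\left(\begin{smallmatrix} * & * \\ p* & *\end{smallmatrix}\right) = \Gamma_0(p)_\gerp$. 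I expect the main obstacle to be exactly this bookkeeping: correctly threading the normalized bases through the cycle of length $f$ and extracting, from the single matrix equation for $\psi_1$, the claimed local order in each parity class — in particular being careful that the "$p$-twist" is distributed so that the resulting order is genuinely $\Gamma_0(p)$-type (index $p$ in a maximal order) and not something smaller. Once the two local orders at each $\gerp \mid p$ are identified and maximality at $\ell \ne p$ is in hand, the global order $\End_{\calO_L}(A)$ has all the completions defining an Eichler order of level $p = \prod_{\gerp \mid p}\gerp$ in $B_{p,L}$, which is the assertion.
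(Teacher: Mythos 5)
Your reduction is the same as the paper's up to the point of the local computation at $p$: Theorem \ref{Tatethm} to localize, maximality at $\ell\neq p$ from $T_\ell(A)\cong(\calO_L\otimes\Z_\ell)^2$, decomposition over the primes $\gerp\mid p$, uniqueness of the superspecial Dieudonn\'e module with RM (Fact \ref{fact23}), and the correct remark that the Rapoport condition forces $\phi_i=1$ for all $i$. At that point the paper does something much simpler than what you propose: since the local data are unique, it computes with the model $A=E\otimes_{\Z}\calO_L$, for which $\End_{\calO_L}(A)=\End(E)\otimes\calO_L$ has reduced discriminant $p\calO_L$, squarefree because $p$ is unramified, and concludes that the order is Eichler of level $p$ by \cite[Prop.~1.2]{Brz1}; no normal forms or commutant computations are needed.

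Your direct computation, as described, has genuine errors at the decisive step. First, the normal form $F\alpha_i=\alpha_{i+1}$, $F\beta_i=p\beta_{i+1}$ (cyclically, with a twist only at the wrap-around) is not the superspecial module: it has an $F$-stable unit-root line, so its slopes are $0$ and $1$, i.e.\ it is the \emph{ordinary} Dieudonn\'e module with RM. In the superspecial Rapoport module $F$ interchanges the two lines at every step (in suitable bases $F e_1^{(i)}=e_2^{(i+1)}$, $F e_2^{(i)}=p\,e_1^{(i+1)}$), so $F^f$ has elementary divisors $(p^{(f-1)/2},p^{(f+1)/2})$ for $f$ odd --- not $(1,p^f)$ --- and is $p^{f/2}$ times a $\sigma^f$-linear isomorphism for $f$ even. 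Second, commutation of $\psi_1$ with $F^f$ is not the only constraint: $\psi_{i+1}=F\psi_iF^{-1}$ is a priori only rational, and integrality of every intermediate $\psi_i$ must be imposed; these step-by-step congruences, not the $F^f$-relation, are what cut out the order. With the correct module they yield exactly the congruence $p\mid(\text{one off-diagonal entry})$, i.e.\ $\Gamma_0(p)_\gerp$, when $f$ is even, and the maximal order of the local division algebra when $f$ is odd; by contrast, the mere commutant of $\left(\begin{smallmatrix}0&1\\ p^f&0\end{smallmatrix}\right)\sigma^f$ in $M_2(W(k))$ is an order of reduced discriminant $\gerp^{f}$, which is not maximal as soon as $f\geq 3$. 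Third, the ``$F\D_i$ differs alternately'' bookkeeping you invoke in the even case describes the $\phi_i\in\{0,2\}$ module of Section \ref{precedante}, i.e.\ the non-Rapoport, level-one variety $A^{+}$, whose endomorphism order at such a prime is all of $M_2(\calO_{L_\gerp})$; carried out literally, that bookkeeping would output level $1$, not $p$ (for the Rapoport module $F\D_i$ has index $p$ in $\D_{i+1}$ for \emph{every} $i$). So either redo the local computation with the correct superspecial module and the intermediate integrality conditions, or take the paper's shortcut via $E\otimes_{\Z}\calO_L$ and the squarefree-discriminant criterion.
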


\begin{proof}
The $\calO_L$-version of Tate's theorem allows us to perform all
local computations on $T_{\ell}(A)$, which are uniquely determined
(this is clear for $\ell \neq p$, and at $\ell = p$, it follows
from Fact \ref{fact23}). For this local computation, pick $A = E
\otimes_{\Z} \calO_L$ for $E$ a supersingular elliptic curve. It
follows that $\End_{\calO_L}(A) = \End(E) \otimes \calO_L$ and the
level is clearly $p\calO_L$. Since $p$ is unramified, it is
squarefree, and thus the order is Eichler by \cite[Prop.
1.2]{Brz1}.
\end{proof}

From now on, we set $A = E \otimes_{\Z} \calO_L$; as we are
interested in local considerations in this section, this entails
no loss of generality and computations are more transparent. Note
that $\D(A) \cong \D(E) \otimes \calO_L$. \noindent For $\gerp_i$
above $p$ such that $f(\gerp_i/p) = 0 \mod{2}$, let $R_{\gerp_i}$
be the Eichler order $\Gamma_0(p)_{\gerp_i}$. Put $R^+_{\gerp_i}
:= M_2(\calO_{L_{\gerp_i}})$. \noindent We can associate to
$R^+_{\gerp_i}$ a (supersingular) abelian variety $A^+$ with RM
(depending on $\gerp_i$) and an isogeny $f: A \arr A^+$. The
isogeny $f$ (and thus $A^+$) is determined uniquely by $f^{\ast}
\D (A^+) $. We describe the Dieudonn\'e module more precisely:
$$ \D_{\gerp_i}(A^+) := \begin{cases} \D (E) \otimes_{\Z_p}
\calO_{L_{\gerp_i}}, & \gerp_i | p , f(\gerp_i/p) = 1 \mod{2} \\
p \cdot R^+_{\gerp_i} \cdot \Big( \D(E) \otimes_{\Z_p}
\calO_{L_{\gerp_i}}\Big) & \mbox{ otherwise. }
\end{cases} $$

\noindent Without loss of generality, we can assume that $p\calO_L
= \gerp$ is inert. Our goal is to describe $R^+_{\gerp} \cdot
(\D(E) \otimes_{\Z_p} \calO_{L_{\gerp}})$, in order to
characterize the abelian variety $A^+$ arising from such a
construction. We can thus suppose that $g$ is even, as it follows
from the condition: $f(\gerp) = 0 \mod{2}$.

\noindent The Dieudonn\'e module $\D(E)$ is identified with
$W(k)^2$, with
$F = (\begin{smallmatrix} 0 & 1 \\
p & 0
\end{smallmatrix})$ and $V = (\begin{smallmatrix} 0 & p \\
1 & 0
\end{smallmatrix})$. The maximal order $\calO = \End(E) \otimes_{\Z} \Z_p$ has the following
presentation: $ \calO = \{ (\begin{smallmatrix} a & b \\
pb^{\sigma} & a^{\sigma}
\end{smallmatrix})  : a,b \in W(\F_{p^2}) \}.$
With this notation, $R_{\gerp} \cong \calO \otimes_{\Z_p}
\calO_{L_{\gerp}}$.

\noindent Choose an isomorphism $\calO_{L_{\gerp}} \cong
W(\F_{p^g})$ (i.e., choose an embedding $\F_{p^2} \subseteq
\F_{p^g}$). We thus get an $\calO \otimes_{\Z_p}
W(\F_{p^g})$-action on $W(k)^2 \otimes_{\Z_p} W(\F_{p^g}) \cong
\oplus_{j=1}^g W(k)_j^2.$ On the other hand, $\calO \otimes_{\Z_p}
W(\F_{p^g}) = \Big( \calO \otimes_{\Z_p} W(\F_{p^2}) \Big)
\otimes_{W(\F_{p^2})} W(\F_{p^g})$ and note that $W(\F_{p^2})$
already splits $\calO \otimes \Q$.

\noindent For $p \neq 2$, one can explicitly pick a unit $b$ such
that $b^{\sigma} = -b$, and the element $$t :=
\mat{0}{b}{pb^{\sigma}}{0} \otimes 1/2pb^{\sigma} +
\mat{0}{\frac{1}{2p}}{\frac{1}{2}}{0} \otimes 1,$$ yields an
embedding $$\calO \otimes_{\Z_p} W(\F_{p^2}) \hookrightarrow
M_2(W(\F_{p^2})),$$ by sending $t \mapsto (\begin{smallmatrix} 0 & 0 \\
1 & 0 \end{smallmatrix})$. Under this embedding, the image of
$\calO \otimes_{\Z_p} W(\F_{p^2})$ is the chosen Eichler order of
level $p$, so that the lattice $R^+_{\gerp} \cdot \D(E) \otimes
\calO_L$ is:
$$ \{ \2vector{v_1}{w_1}, \2vector{v_2}{w_2}, \dots,
\2vector{v_g}{w_g} | v_i, w_i \in W(k) \mbox{ for all } i,j \mbox{
and } v_i \in 1/p \ \cdot W(k), \mbox{ for } i \mbox{ odd} \} .$$
\noindent There remains the task to compute explicitly the
Frobenius action. This follows from the previous presentation of
$R^+_{\gerp} \cdot \D(E) \otimes \calO_L$. Recall that the
Frobenius acts via the matrix $(\begin{smallmatrix} 0 & 1 \\
p & 0
\end{smallmatrix})$. The Frobenius $Fr \otimes 1 $ maps the bases for the various
$\D_i$ in the following way (with the notation $\Fr_i := \Fr
\otimes 1_{|W(k)_i}$):

$ i \mbox{ odd }
\begin{cases}
\begin{array}{lrlcl} \D_i & = & < \2vector{1/p}{0}, \2vector{0}{1}>
& \overset{\Fr_i}{\arr} & <\2vector{0}{1},
\2vector{1}{0}> \\
\D_{i+1} & = & <\2vector{1}{0}, \2vector{0}{1} > &
\overset{\Fr_{i+1}}{\arr} &< \2vector{0}{1}, \2vector{1}{0}>
\end{array} \\
\end{cases}
,$

\noindent and similarly for $i$ even i.e., $\Fr_i$ is again given by $(\begin{smallmatrix} 0 & 1 \\
p & 0
\end{smallmatrix})$.
 \noindent From this, we can compute the dimensions $\phi_i$
of the various quotients $\D_{i+1}/\Fr(\D_i) =
\D_{i+1}/p\D_{i+1}$. In short, $\phi_i =
\begin{cases}
0 & \quad i \mbox{ even } \\
2 & \quad i \mbox{ odd }
\end{cases}
$.

\noindent By Proposition \ref{classification}, we know that all
Dieudonn\'e modules with invariants $\{ \phi_i \}$ as above form a
unique isomorphism class. On the other hand, from the above
computations, the $a$-number of $A^+$ is easily seen to be $g$, so
$A^+$ is also a superspecial abelian variety. Note that it does
not satisfy the Rapoport condition. This is in a posteriori
agreement with the classification of superspecial crystals of
\cite{Yu2}.

\section{Left ideal classes and superspecial points}
\label{leftidealclasses}

From this section on, we suppose that $h^+(L)=1$; we recall that
any point on the Hilbert moduli space is thence principally
polarized. Abelian varieties and their morphisms are assumed to be
defined over $\Fp$, in particular, $\Hom_{\calO_L}(A_1,A_2) =
\Hom_{\calO_L, \Fp}(A_1,A_2)$ for $A_1,A_2$ abelian varieties with
RM.

We establish a link between the arithmetic of Eichler orders of
level $p$ in the quaternion algebra $B_{p,\infty} \otimes L$ and
the superspecial locus of the Hilbert moduli space.

\noindent The first step is to relate left ideals of an Eichler
order of level $p$ in $B_{p,L}$ and modules of $\calO_L$-isogenies
between superspecial abelian varieties. \noindent Let $\calO$ be
an order in a quaternion algebra. An $\calO$-ideal $I$ is
projective if and only if $I$ is locally principal (\cite[Prop.
1.1]{Brz1}). Thus, the following is a direct corollary of the
supersingular $\calO_L$-version of Tate's theorem (see Theorem
\ref{Tatethm}):

\begin{cor}
Let $A_1,A_2$ be two superspecial abelian varieties with RM
satisfying the Rapoport condition. Let $\calO =
\End_{\calO_L}(A_1)$. Then $\Hom_{\calO_L}(A_1,A_2)$ is a
projective $\calO$-module of rank one.
\end{cor}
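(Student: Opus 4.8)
The plan is to deduce this directly from Theorem \ref{Tatethm} together with the cited equivalence between projectivity and local principality of quaternionic ideals. First I would set $\calO = \End_{\calO_L}(A_1) = \Cent_{\End(A_1)}(\calO_L)$, which by the previous proposition is an Eichler order of level $p$ in $B_{p,L}$, and $M = \Hom_{\calO_L}(A_1,A_2)$, a left $\calO$-module via precomposition. Since both $A_1$ and $A_2$ are supersingular with RM over $\Fp$, the space $\Hom^0_{\calO_L}(A_1,A_2) := M \otimes_\Z \Q$ is a rank-one free module over $B_{p,L}$ (the abelian varieties are isogenous, indeed both isogenous to a power of a fixed supersingular elliptic curve, and $\Hom^0$ between isogenous abelian varieties with the $\calO_L$-structure is free of rank one over the centralizer algebra $B_{p,L}$ by the same argument invoked for $\Cent_{\End^0(A)}(L) \cong B_{p,L}$ via \cite[Lemma 6]{Chai}). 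Hence $M$ is a full $\calO$-lattice in $B_{p,L}$, i.e., an $\calO$-ideal.

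The heart of the argument is then to check local principality at every prime. Fix a rational prime $\ell$. Theorem \ref{Tatethm} gives, for $\ell \neq p$,
$$M \otimes \Z_\ell \cong \Hom_{\calO_L \otimes \Z_\ell}(T_\ell(A_1), T_\ell(A_2)),$$
and since $A_1, A_2$ satisfy the Rapoport condition we have $T_\ell(A_i) \cong (\calO_L \otimes \Z_\ell)^2$ (as recalled in the text, cf. \cite[Lem. 1.3]{Rapoport}), so the right-hand side is $\Hom_{\calO_L \otimes \Z_\ell}\big((\calO_L \otimes \Z_\ell)^2, (\calO_L \otimes \Z_\ell)^2\big) \cong M_2(\calO_L \otimes \Z_\ell)$, a free module of rank one over $\calO \otimes \Z_\ell \cong M_2(\calO_L \otimes \Z_\ell)$ — so $M \otimes \Z_\ell$ is principal. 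At $\ell = p$, Theorem \ref{Tatethm} identifies $M \otimes \Z_p$ with $\Hom_{\calO_L \otimes W(k)[F,V]}(\D(A_1), \D(A_2))$; by Fact \ref{fact23}, $\D(A_1)$ and $\D(A_2)$ are each isomorphic to the unique superspecial Dieudonné module with RM, so one may fix an isomorphism $\D(A_1) \cong \D(A_2) \cong \D$, under which $M \otimes \Z_p \cong \End_{\calO_L \otimes W(k)[F,V]}(\D) = \calO \otimes \Z_p$ acting on itself, again principal.

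Having shown $M$ is locally principal at every prime, \cite[Prop. 1.1]{Brz1} gives that $M$ is a projective $\calO$-module, and its local rank-one-ness (being locally isomorphic to $\calO \otimes \Z_\ell$) shows it has rank one; this is exactly the assertion. The one point requiring a little care — and the step I expect to be the main obstacle — is the identification $M \otimes \Z_p \cong \calO \otimes \Z_p$ as $\calO \otimes \Z_p$-modules rather than merely as $\Z_p$-modules of the correct rank: one must verify that choosing a Dieudonné isomorphism $\D(A_1) \cong \D(A_2)$ can be done $\calO_L$-linearly and $F,V$-equivariantly (which is precisely what the uniqueness in Fact \ref{fact23} provides, once one notes that any two such isomorphisms differ by an element of $\calO \otimes \Z_p = \Aut_{\calO_L \otimes W(k)[F,V]}(\D)$-times-unit, so the resulting $\calO \otimes \Z_p$-module structure on $M \otimes \Z_p$ is well-defined and free of rank one). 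Everything else is bookkeeping with the two cases of Theorem \ref{Tatethm} and the Rapoport normalization of Tate modules.
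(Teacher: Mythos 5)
Your proposal is correct and follows essentially the same route as the paper: the variant of Tate's theorem plus the Rapoport identification $T_\ell(A_i)\cong(\calO_L\otimes\Z_\ell)^2$ at $\ell\neq p$, the uniqueness of the superspecial Dieudonn\'e module with RM (Fact \ref{fact23}) at $p$, and the Brzezi\'nski criterion that locally principal ideals are projective. Your extra remarks (fullness of the lattice in $B_{p,L}$ and the well-definedness of the identification at $p$ up to units) simply flesh out steps the paper leaves implicit.
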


\begin{proof}
Since $T_{\ell}(A_i) \cong (\calO_L \otimes \Z_{\ell})^2$ for
$\ell \neq p$, it follows from Thm.\ref{Tatethm} that
$$\Hom_{\calO_L}(A_1,A_2) \otimes \Z_{\ell} \cong \Hom_{\calO_L
\otimes \Z_{\ell}} (T_{\ell}(A_1), T_{\ell}(A_2)) \cong
\End_{\calO_L}(A_1) \otimes \Z_{\ell},$$ and for $\ell = p$, we
use Fact \ref{fact23}.
\end{proof}

Further, we show that this geometric construction recovers all
ideal classes by establishing a bijection between left ideals
classes and superspecial points. The classical proof using the
concept of kernel ideals (see \cite[Thm. 3.15]{Waterhouse})
applies equally well to hereditary orders (e.g., Eichler orders of
squarefree level), and thus can be extended to our setting (see
below for a definition of kernel ideals). Unfortunately, the
classical reference for the original result of Nehrkorn is:
\cite[Satz 27, p.106]{Deuring}, whose statement concerns maximal
orders. One has thus to check that the proof uses only the fact
that ideals of hereditary orders are locally principal and through
the implication that all locally principal ideals are kernel
ideals, one obtains the desired characterization. For this reason,
we indicate an alternate proof not relying on \cite{Deuring}.

Let $A$ be a superspecial abelian variety with RM satisfying the
Rapoport condition. Let $\calO = \End_{\calO_L}(A)$. We give an
explicit bijection by using the Serre tensor construction $A
\otimes_{\calO} -$ (see \cite[\S 7]{Conrad} for a description of
this formalism).

\begin{lem}
Let $A$ be a superspecial abelian variety with RM satisfying the
Rapo-port condition. Let $I$ be a projective rank one
$\calO$-module. Then $A \otimes_{\calO} I$ is also superspecial
abelian variety with RM satisfying the Rapoport condition.
\end{lem}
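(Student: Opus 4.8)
The plan is to verify that the three defining properties—abelian variety with RM, satisfying the Rapoport condition, superspecial—are each preserved under the Serre tensor construction $A \otimes_{\calO} I$, given that $I$ is a projective rank one $\calO$-module. First I would recall the basic properties of the Serre tensor construction from \cite[\S 7]{Conrad}: since $I$ is projective of rank one over $\calO$, it is in particular finitely presented and flat, so $A \otimes_{\calO} I$ is again an abelian variety, of the same dimension as $A$, equipped with an $\calO$-action; and formation of $A\otimes_\calO I$ commutes with base change and with passage to $\ell$-adic Tate modules and (since we are in characteristic $p$) to Dieudonn\'e modules, giving $T_\ell(A\otimes_\calO I)\cong T_\ell(A)\otimes_\calO I$ for $\ell\neq p$ and $\D(A\otimes_\calO I)\cong \D(A)\otimes_\calO I$. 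The $\calO_L$-action on $A$ (which commutes with $\calO$ since $\calO=\End_{\calO_L}(A)=\Cent_{\End(A)}(\calO_L)$) induces an $\calO_L$-action on $A\otimes_\calO I$, so it is again an abelian variety with RM by $\calO_L$.

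Next I would check supersingularity and then the Rapoport condition. For supersingularity: since $I$ is projective of rank one over $\calO$, it is locally free, so locally on $\Spec \calO_L\otimes\Z_p$ (indeed after completing at each $\gerp\mid p$ and then, by Fact~\ref{fact23} combined with Theorem~\ref{Tatethm}, working with the unique superspecial Dieudonn\'e module) the module $\D(A)\otimes_\calO I$ is isomorphic to $\D(A)$ itself; in particular $\D(A\otimes_\calO I)$ has all Newton slopes equal to $1/2$, so $A\otimes_\calO I$ is supersingular. For the Rapoport condition I would argue that $\Lie(A\otimes_\calO I)\cong \Lie(A)\otimes_\calO I$ as $\calO_L\otimes_\Z k$-modules (using that Lie algebra is a right-exact additive functor and again that $I$ is flat over $\calO$), and since $I$ is locally free over $\calO$ and $\calO$ is an Eichler order which is, at each prime above $p$, either maximal or $\Gamma_0(p)$ inside $M_2(\calO_{L_\gerp})$—in particular $\calO\otimes_{\calO_L}k$ is Morita-trivial enough that tensoring a free $\calO_L\otimes k$-module of rank one against a locally free $\calO$-module of rank one yields again a locally free $\calO_L\otimes k$-module of rank one. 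Concretely I would localize: after completing at $\gerp\mid p$, $I_\gerp$ is a free $\calO_\gerp$-module (locally principal, by the projectivity plus \cite[Prop. 1.1]{Brz1}), hence $\Lie(A)\otimes_\calO I$ localizes to $\Lie(A)_\gerp$, which is free of rank one over $\calO_{L_\gerp}\otimes k$ by hypothesis.

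Finally I would assemble: having shown $A\otimes_\calO I$ is an abelian variety with RM by $\calO_L$, supersingular, and satisfying the Rapoport condition, and (if needed for later statements) that it acquires a principal polarization since $h^+(L)=1$ forces this on any RM abelian variety satisfying Rapoport that arises in our superspecial family—but in fact the lemma as stated only asks for the three listed properties, so I would stop once those are verified. The main obstacle I anticipate is the Rapoport-condition step: one must be careful that $I$ being \emph{globally} only projective (not free) over $\calO$ does not spoil the \emph{local} freeness of the Lie algebra, so the argument genuinely has to be made prime-by-prime above $p$, invoking that over a local (or complete) base the projective module $I_\gerp$ becomes free, and then tracking that $\Lie$ commutes with $\otimes_\calO I$ and with this localization; the analogous check away from $p$ is immediate since $T_\ell(A)\cong(\calO_L\otimes\Z_\ell)^2$ is unaffected in the relevant sense. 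A secondary subtlety is confirming that $\D(A\otimes_\calO I)\cong\D(A)\otimes_\calO I$ as $(\calO_L\otimes W(k))[F,V]$-modules with the $F,V$-structure intact, which is where one uses that the Serre construction is compatible with the Dieudonn\'e functor; granting this, the superspecial condition $F\D=V\D$ transports immediately.
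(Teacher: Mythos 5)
Your proposal is correct and follows essentially the same route as the paper: the paper likewise notes that $I$, being projective, is locally isomorphic to $\calO$ at any prime, so by Theorem \ref{Tatethm} the Tate and Dieudonn\'e modules are unchanged and superspeciality transfers, while the Rapoport condition is preserved by the tensor construction. The only difference is one of detail: where the paper asserts in a single sentence that the Serre construction respects the Rapoport condition, you spell out the identification $\Lie(A\otimes_{\calO}I)\cong\Lie(A)\otimes_{\calO}I$ and the localization at primes above $p$, which is exactly the argument implicitly intended.
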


\begin{proof}
The tensor construction respects the Rapoport condition. Since $I$
is isomorphic to an ideal of $\calO$, it is always possible to
choose a representative that makes it locally isomorphic to
$\calO$ at any given prime. By Theorem \ref{Tatethm}, this implies
that the corresponding Tate modules (resp. Dieudonn\'e modules)
are isomorphic, and thus the abelian variety is superspecial.
\end{proof}

\noindent An argument bypassing Nehrkorn's theorem by using Tate's
theorem is provided by the sequence of three lemmas:

\begin{lem} \label{avoidingp} For any two superspecial abelian varieties with RM $A_1$ and $A_2$ satisfying the Rapoport condition, there exists an
$\calO_L$-isogeny $f: A_1 \arr A_2$ of degree prime to $p$.
\end{lem}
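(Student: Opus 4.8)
The plan is to reduce the statement to a local question at $p$, exploiting that superspecial abelian varieties with RM satisfying Rapoport all have the same Dieudonné module (Fact~\ref{fact23}) and the same prime-to-$p$ Tate modules. First I would fix an auxiliary supersingular model $A_0 = E \otimes_{\Z} \calO_L$ with $E/\Fp$ supersingular, and recall that by Theorem~\ref{Tatethm} the modules $\Hom_{\calO_L}(A_1,A_2)$ are detected locally: an $\calO_L$-homomorphism $f : A_1 \arr A_2$ is an isogeny of degree prime to $p$ exactly when $T_\ell(f)$ is an isomorphism for all $\ell \neq p$ and $\D(f)$ is injective with cokernel of length prime to $p$ — equivalently, when $\D(f)$ is an isomorphism after inverting nothing, i.e.\ when $\D(f)$ is already an isomorphism, since $A_1,A_2$ have the same superspecial Dieudonné module. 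So the content is: produce an $\calO_L$-homomorphism $f$ whose Dieudonné module map is an isomorphism and which is an isogeny away from $p$.

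The core step is thus purely local at $p$ combined with a global approximation. By Fact~\ref{fact23}, there is an isomorphism of Dieudonné modules $\varphi : \D(A_1) \overset{\cong}{\arr} \D(A_2)$ compatible with $\calO_L \otimes W(k)[F,V]$; by the $\calO_L$-version of Tate's theorem (Theorem~\ref{Tatethm} at $\ell = p$) this corresponds to an actual $\calO_L$-homomorphism $f_p \in \Hom_{\calO_L}(A_1,A_2) \otimes \Z_p$ which is a $p$-adic unit multiple of an isomorphism — more precisely, an element whose image in $\Hom_{\calO_L \otimes W(k)[F,V]}(\D(A_1),\D(A_2))$ is an isomorphism. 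For $\ell \neq p$ I would similarly pick any $\calO_L \otimes \Z_\ell$-isomorphism $T_\ell(A_1) \overset{\cong}{\arr} T_\ell(A_2)$ (these Tate modules are both free of rank $2$ over $\calO_L \otimes \Z_\ell$ by \cite[Lem.~1.3]{Rapoport}), which again comes from $\Hom_{\calO_L}(A_1,A_2) \otimes \Z_\ell$. Now $\Hom_{\calO_L}(A_1,A_2)$ is a projective rank-one module over $\calO = \End_{\calO_L}(A_1)$ (by the Corollary just proved), hence locally free; since $\calO$ is an Eichler (in particular hereditary) order in $B_{p,L}$, the quaternion algebra $B_{p,L}$ satisfies the Eichler condition (it is ramified only at $p$ and a single archimedean place of $\Q$, so split at infinitely many places of $L$), and strong approximation away from those places lets me choose a single global element $f \in \Hom_{\calO_L}(A_1,A_2)$ that simultaneously generates $\Hom_{\calO_L}(A_1,A_2) \otimes \Z_\ell$ over $\calO \otimes \Z_\ell$ for all $\ell \neq p$ and equals (up to a unit in $\calO \otimes \Z_p$) the isomorphism $f_p$ at $p$. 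Such an $f$ is an $\calO_L$-isogeny whose reduced norm is a $p$-power times a unit, hence has degree a power of $p$; composing with a suitable multiplication-by-$N$ with $N$ prime to $p$ if necessary to clear denominators, and then factoring out the $p$-part, yields the desired $f$ of degree prime to $p$.

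More cleanly: $\Hom_{\calO_L}(A_1,A_2)$, being a projective rank-one right $\calO$-module, is isomorphic (after choosing a representative in its ideal class) to an $\calO$-ideal that is locally principal everywhere; I would choose the representative $I$ in this class so that $I \otimes \Z_p = \calO \otimes \Z_p$, using that the class group of $\calO$ is computed adelically and one is free to modify the representative at $p$. A generator of $I$ at each finite $\ell \neq p$ gives a prime-to-$p$ isogeny at $\ell$, while at $p$ triviality of the local component gives an isomorphism on $p$-divisible groups; by Theorem~\ref{Tatethm} these glue to a global $f$, and the only primes where $\deg f$ can be divisible are those dividing the norm of $I$, which by construction are all different from $p$.

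The main obstacle I anticipate is making the local-to-global gluing rigorous: one must verify that the element of $\Hom_{\calO_L}(A_1,A_2)$ built from compatible local data at all places actually has degree prime to $p$, which requires controlling the degree (equivalently the reduced norm of $f$ relative to the norm of the ideal $\Hom_{\calO_L}(A_1,A_2)$) and not merely that $f$ is an isogeny. This comes down to choosing the ideal-class representative $I$ with trivial $p$-component — harmless because one may always twist an ideal by an idele supported at $p$ without changing the class — together with the fact, already used in the previous Corollary, that $\deg$ on $\Hom_{\calO_L}(A_1,A_2)$ is the norm form of the quadratic module and hence visible from the local data. Once the representative is normalized at $p$, everything else is a routine application of Theorem~\ref{Tatethm} and Fact~\ref{fact23}.
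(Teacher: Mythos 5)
Your reduction of the problem to the place $p$ is the right starting point (Fact~\ref{fact23} plus Theorem~\ref{Tatethm} at $\ell=p$ give an isomorphism in $\Hom_{\calO_L}(A_1,A_2)\otimes\Z_p$, and degree prime to $p$ is exactly the condition that the induced map on Dieudonn\'e modules be an isomorphism), but the local-to-global step, which you yourself flag as the main obstacle, is where the argument genuinely breaks. The strong-approximation claim in your first paragraph --- a single $f\in\Hom_{\calO_L}(A_1,A_2)$ generating $\Hom\otimes\Z_\ell$ over $\calO\otimes\Z_\ell$ for \emph{all} $\ell\neq p$ and agreeing with the isomorphism at $p$ up to a unit --- would force $\Hom_{\calO_L}(A_1,A_2)$ to be a principal $\calO$-ideal, which is false in general (as $A_2$ varies, these modules realize every ideal class, cf.\ Theorem~\ref{bijection}); strong approximation only controls finitely many places and does not give integral generation everywhere. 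The fallback ``reduced norm a $p$-power, then factor out the $p$-part'' is also not legitimate: if $\deg f=p^a m$ one can factor $f$ through $A_1/K_p$, but the prime-to-$p$ piece is then an isogeny from that quotient, not from $A_1$. In the second paragraph, replacing $\Hom_{\calO_L}(A_1,A_2)$ by an ideal-class representative $I$ with $I\otimes\Z_p=\calO\otimes\Z_p$ does not help, because the element you need must lie in the actual module $\Hom_{\calO_L}(A_1,A_2)$ (the degree form lives there), and local generators at the various $\ell$ do not ``glue'' via Theorem~\ref{Tatethm} into a global homomorphism --- Tate's theorem identifies completions, it does not assemble adelic data into elements of the global $\Hom$. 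There is also a circularity risk in invoking the identification of the degree with the normalized norm form: in this paper that identification (Lemma~\ref{112}, Prop.~\ref{indeterminacy}) is proved \emph{using} the existence of two isogenies of coprime degree, i.e.\ downstream of the present lemma.

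The missing idea is an elementary $p$-adic approximation: choose $\F_q$ so that everything is defined over $\F_q$; then $\Hom_{\F_q}(A_1,A_2)$ is a finite free $\Z$-module, hence dense in $\Hom_{\F_q}(A_1,A_2)\otimes\Z_p\cong\Hom(\D(A_1),\D(A_2))^{\Gal}$. Pick $f$ congruent modulo $p^r$ to the isomorphism $\phi$ of Dieudonn\'e modules furnished by Fact~\ref{fact23}; for $r$ large the matrix of $\D(f)$ has unit determinant, so $\deg_p(f)=1$, and $f$ is automatically an isogeny since any nonzero $\calO_L$-homomorphism between these abelian varieties is one. Nothing at all needs to be arranged at the primes $\ell\neq p$. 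This is the paper's proof, and it avoids both the false global generation statement and the gluing problem.
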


\begin{proof}
The idea it to approximate the isomorphism between the Dieudonn\'e
modules $p$-adically. All morphisms are $\calO_L$-morphisms, and
we drop the mention of $\calO_L$ from the notation. Choose $q$ big
enough so that $A_1,A_2$ are defined over $\F_q$. Fix an
isomorphism: $\phi \in \Hom(\D(A_1),
\D(A_2))^{\Gal(\overline{\F}_p/ \F_q)} \overset{\D^{-1}}{\cong}
\Hom_{\F_q}(A_1,A_2) \otimes \Z_p.$ Since $\Hom_{\F_q}(A_1,A_2)$
is a free $\Z$-module, $\Hom_{\F_q}(A_1,A_2)
\overset{dense}{\hookrightarrow} \Hom_{\F_q}(A_1,A_2)\otimes
\Z_{p}$ has dense image. Thus, there exists $f \in
\Hom_{\F_q}(A_1,A_2)$ such that $p^r | (\D(f) - \phi),$ and $p^r |
f - \D^{-1} (\phi))$. The degree of $\phi$ is equal to
$\lg_{\Z/p\Z} \D(A_2)/\phi(\D(A_1))$. Let us switch to matrix
representatives: Since $\phi$ is an isomorphism (i.e.,
$\deg_p(\phi) = 1$), the determinant of its matrix $M(\phi) \in
M_{2g}(W(\overline{\F}_p))$ is a unit. Call $N_f$ the matrix
representative of $f$. Since $p^r | (N_f - M(\phi))$, it follows
that $\det(N_f)$ is a $p$-adic unit if we take $r$ big enough, and
thus $\deg_p(f) = 1$.
\end{proof}

\noindent We can associate an $\calO_L$-subgroup scheme to an
ideal $J$ by taking the scheme-theoretic intersection $A[J] :=
\cap_{f \in J} A[f]$. Let $I(H)$ be the ideal of $\calO$
associated to a finite $\calO_L$-group scheme $H$ i.e., $$I(H) :=
\{ f \in \calO | f(H) = 0 \}.$$ We say that an ideal $J$ is a
kernel ideal if $I(A[J]) = J$.

\begin{lem} Every ideal $J$ of $\calO$ of prime-to-$p$ norm  is a kernel
ideal i.e., $J = I(A[J])$.
\end{lem}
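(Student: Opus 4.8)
The plan is to reduce the claim to a purely local statement at each prime $\ell \neq p$, using the $\calO_L$-version of Tate's theorem (Theorem \ref{Tatethm}) to transfer questions about $\calO_L$-isogenies and kernels into questions about lattices in the Tate modules. Concretely, for a projective ideal $J \subseteq \calO$ of norm prime to $p$, I would first note that $A[J]$ is a finite $\calO_L$-group scheme supported away from $p$, so it is determined by its $\ell$-parts $A[J][\ell^\infty]$ for the finitely many $\ell \mid \mathrm{N}(J)$, $\ell \neq p$. Under the Tate isomorphism $\End_{\calO_L}(A) \otimes \Z_\ell \cong \End_{\calO_L \otimes \Z_\ell}(T_\ell(A))$ and $T_\ell(A) \cong (\calO_L \otimes \Z_\ell)^2$, the ideal $J \otimes \Z_\ell$ corresponds to a lattice configuration, and since $J$ is projective, hence locally principal (\cite[Prop. 1.1]{Brz1}), $J \otimes \Z_\ell = \calO_\ell \cdot \beta_\ell$ for some $\beta_\ell \in \End_{\calO_L \otimes \Z_\ell}(T_\ell(A))$.

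The second step is the local computation: for a single generator $\beta_\ell$ acting on the free rank-two module $M := \calO_L \otimes \Z_\ell$ (a product of complete DVRs, since $p$ is the only ramified prime and $\ell \neq p$), one shows $I(A[J]) \otimes \Z_\ell$ — which is $\{ f : f(T_\ell(A)/\beta_\ell T_\ell(A)) = 0\}$, i.e. $\{f : f(T_\ell(A)) \subseteq \beta_\ell T_\ell(A)\}$ — equals $\calO_\ell \beta_\ell$ again. This is the standard fact that for a locally free rank-two module over a (semilocal) Dedekind-like ring $R' := \calO_L \otimes \Z_\ell$, the transporter ideal $\{ g \in M_2(R') : g R'^2 \subseteq \beta R'^2 \}$ recovers $M_2(R')\beta$ when $\det \beta \neq 0$; one reduces to the local case where $R'$ is a DVR and $\beta$ can be put in Smith normal form $\mathrm{diag}(\ell^a, \ell^b)$, and then the transporter is visibly $M_2(R') \cdot \mathrm{diag}(\ell^a,\ell^b)$. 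Because $\End_{\calO_L}(A)$ is an Eichler order and hence hereditary (being of squarefree level $p$, by \cite[Prop. 1.2]{Brz1}), the same computation goes through even though the order is not maximal: heredity guarantees every ideal is locally principal, which is exactly what the transporter computation needs.

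The third and final step is to assemble: at $\ell = p$ the ideal $J \otimes \Z_p$ is by hypothesis all of $\calO_p$, so $A[J]$ has trivial $p$-part and the condition $I(A[J]) = J$ at $p$ is vacuous (both sides are $\calO_p$); at the finitely many $\ell \mid \mathrm{N}(J)$ one has $I(A[J]) \otimes \Z_\ell = J \otimes \Z_\ell$ by the local computation; and at all other $\ell$ both sides localize to $\calO_\ell$. Since an ideal of $\calO$ is determined by its localizations, $I(A[J]) = J$. I expect the main obstacle to be the transporter computation over the semilocal ring $\calO_L \otimes \Z_\ell$ when this ring is not a DVR (e.g. when $\ell$ splits in $L$): one must be careful that $\beta_\ell$ is a unit times a diagonal matrix \emph{componentwise} over the product decomposition $\calO_L \otimes \Z_\ell = \prod_{\mathfrak{l} \mid \ell} \calO_{L_{\mathfrak{l}}}$, and that the Eichler-order structure interacts correctly with this decomposition — but since $\ell \neq p$ the order is locally maximal (a product of matrix algebras $M_2(\calO_{L_{\mathfrak{l}}})$), so Smith normal form applies in each factor and the argument is routine. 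The potential subtlety of heredity at $p$ simply does not arise here because we have excluded $p$ from $\mathrm{N}(J)$.
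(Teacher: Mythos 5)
Your strategy is the same as the paper's: reduce everything to Tate modules via Theorem \ref{Tatethm}, use that $J$ is locally principal, note that at primes over $p$ both sides are the full local order (since $\Norm(J)$ is prime to $p$), and at the remaining primes, where the Eichler order is maximal and isomorphic to $M_2$ over each factor of $\calO_L\otimes\Z_\ell$, carry out an explicit transporter/Smith-normal-form computation. The paper does this prime-by-prime in $\calO_L$ rather than by rational primes $\ell$, which is an immaterial difference.

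There is, however, a concrete left/right slip in your key local step. The subgroup $A[J][\ell^\infty]$ sits inside $A[\ell^\infty]\cong V_\ell/T_\ell(A)$ (with $V_\ell:=T_\ell(A)\otimes\Q_\ell$) as $\ker(\beta_\ell)=\beta_\ell^{-1}T_\ell(A)/T_\ell(A)$, with $f\in\calO$ acting through its natural action on $V_\ell$; it is isomorphic to $T_\ell(A)/\beta_\ell T_\ell(A)$ only via multiplication by $\beta_\ell$, which is not $\calO$-equivariant. Hence the correct annihilation condition is $f\beta_\ell^{-1}T_\ell(A)\subseteq T_\ell(A)$, i.e. $f\in M_2(\calO_L\otimes\Z_\ell)\,\beta_\ell=J\otimes\Z_\ell$, a left ideal, as wanted. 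The condition you wrote instead, $f(T_\ell(A))\subseteq\beta_\ell T_\ell(A)$, cuts out $\beta_\ell\,M_2(\calO_L\otimes\Z_\ell)$, a right ideal, not $M_2(\calO_L\otimes\Z_\ell)\,\beta_\ell$ as you asserted: for $\beta=\diag(\ell,1)$ the transporter $\{g: gR'^2\subseteq\beta R'^2\}$ consists of matrices whose top row is divisible by $\ell$, i.e. $\beta M_2(R')$, whereas $M_2(R')\beta$ consists of matrices whose left column is divisible by $\ell$; and $\beta_\ell M_2$ need not equal $J\otimes\Z_\ell$. So as written you have two compensating misstatements (the wrong module model for $A[J][\ell^\infty]$ and the wrong side in the transporter identity) which cancel to give the right final answer. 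With the corrected identification the local computation is a one-liner, and the remainder of your argument --- maximality of the order away from $p$ because the level is $p$, triviality at $p$, and gluing over all primes --- is exactly the paper's. (One minor point: $\calO_L\otimes\Z_\ell$ is a product of complete DVRs for every $\ell$, irrespective of ramification of $\ell$ in $L$; what you actually need, and do use, is that $B_{p,L}$ is split and the order maximal at all primes away from $p$.)
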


\begin{proof}
Under the prime-to-$p$ condition, this is a straightforward
computation with Tate modules. Since this is meant to replace the
appeal to Nehrkorn's theorem, we give details. \noindent First,
note that $\tilde{J} = I(A[J])$ is an ideal. We compute locally:
we need to show that $\tilde{J_{\gerq}} = J_{\gerq}$ for $\gerq
\not| \ p$, and $\tilde{J_{\gerq}} = \calO_{\gerq}$ for $\gerq |
p$. Since $J$ is an integral ideal, we can fix isomorphisms
$\calO_{\gerq} \cong M_2(\calO_{L_{\gerq}})$ for $\gerq \not| \
p$, and thus since $J$ is locally principal, there are $m_q \in
M_2(\calO_{L_{\gerq}})$ such that $J_{\gerq} = \calO_{\gerq}
m_{\gerq}$. Second, write
$$I(A[J])_{\gerq} = \tilde{J} \otimes_{\calO_L} \calO_{L_{\gerq}}
\subseteq \calO \otimes_{\calO_L} \calO_{L_{\gerq}} =
\End_{\calO_{L_{\gerq}}}(T_{\gerq}(A)),$$ where $T_{\gerq}(A)$ is
defined by replacing $\ell$ by $\gerq$ in the definition of the
Tate module.

 \noindent Note
that for $H = A[J]$, $I(A[J])_{\gerq} = \{ f \in
\End_{\calO_{L_{\gerq}}}(T_{\gerq}(A)) | f(T_{\gerq}(A)) \subseteq
\Lambda_{\gerq}(H) \}$, where $\Lambda_{\gerq}(H) :=  \pi_H^{-1}
(T_{\gerq}(A/H) )/ T_{\gerq} (A) $, for $\pi_H : T_{\gerq}(A) \arr
T_{\gerq}(A/H)$. Next, we check that $A[J]_{\gerq} \cong T_{\gerq}
(A) / \sum_{f \in J_{\gerq}} f(T_{\gerq}(A)) = T_{\gerq}(A) / m_q
T_{\gerq}(A)$ for $\gerq \not| \ p$. Using the identifications
$\calO_{\gerq} \cong M_2(\calO_{L_{\gerq}})$, we check easily that
if $n_{\gerq} \in \GL_2(\calO_{L_{\gerq}})$ and $n_{\gerq}
T_{\gerq}(A) \subseteq m_q T_{\gerq}(A)$, then $n_q \in M_2
(\calO_{L_{\gerq}}) m_q$. This implies that $\tilde{J_{\gerq}}
\subseteq J_{\gerq}$ and thus $\tilde{J_{\gerq}} = J_{\gerq}$. To
finish the proof, we need to check the remaining equality at
$\gerq | p$: since $A[J]_{\gerq} = \{ 0 \}$, $J_{\gerq} =
\calO_{\gerq}$, we get that:
$$ \tilde{J_{\gerq}} = \{ f \in
\End_{\calO_{L_{\gerq}}}(T_{\gerq}(A) | f(T_{\gerq}(A)) \subseteq
T_{\gerq}(A) \} = \calO_{\gerq}.$$

\end{proof}

\begin{lem}
Any $\calO_L$-subgroup $H$ of $A$ of prime-to-$p$ order is of the
form $A[I]$ for some integral ideal $I$ of $\calO$.
\end{lem}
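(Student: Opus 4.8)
The plan is to take $I := I(H) = \{f \in \calO \mid f(H) = 0\}$, the ideal introduced above, and to prove the equality $A[I(H)] = H$; this is the assertion converse to the previous lemma, and the two together identify prime-to-$p$ subgroups with prime-to-$p$ integral ideals. The inclusion $H \subseteq A[I(H)]$ is immediate from the definition of $I(H)$. For the reverse inclusion I would first localize. Since $H$ is a finite commutative group scheme over $\Fp$ of order $n$ prime to $p$, it is killed by $n$, so $n \in I(H)$; hence $A[I(H)] \subseteq A[n]$ is also of order prime to $p$. Both $H$ and $A[I(H)]$ are therefore \'etale and are determined by their $\gerq$-primary parts $H_\gerq$, $A[I(H)]_\gerq$ as $\gerq$ runs over the primes of $\calO_L$ (at $\gerq \mid p$ both parts vanish, equivalently $I(H)_\gerq = \calO_\gerq$ because $n$ is a $\gerq$-adic unit), so it suffices to check $A[I(H)]_\gerq = H_\gerq$ for every $\gerq \nmid p$.

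Fix such a $\gerq$. I would use Theorem \ref{Tatethm} to identify $\calO_\gerq := \calO \otimes_{\calO_L} \calO_{L_\gerq}$ with $\End_{\calO_{L_\gerq}}(T_\gerq(A))$; since $A$ carries RM, $T_\gerq(A) \cong \calO_{L_\gerq}^{\,2}$ (cf. \cite[Lem. 1.3]{Rapoport}, recalled in Section 3), so $\calO_\gerq \cong M_2(\calO_{L_\gerq})$. Working inside $V_\gerq(A) := T_\gerq(A) \otimes_{\calO_{L_\gerq}} L_\gerq$, the subgroup $H_\gerq$ corresponds to a unique lattice $\Lambda$ with $T := T_\gerq(A) \subseteq \Lambda$ and $\Lambda / T \cong H_\gerq$ (one may take $\Lambda = T_\gerq(A/H)$, transported into $V_\gerq(A)$ along the quotient isogeny). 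An element $f \in \calO_\gerq$ annihilates $H_\gerq$ precisely when $f(\Lambda) \subseteq T$, so $I(H)_\gerq = \{f \in \End_{\calO_{L_\gerq}}(T) \mid f(\Lambda) \subseteq T\}$; and since $f$ acts on $A[\gerq^\infty] = V_\gerq(A)/T$ through its $L_\gerq$-linear extension to $V_\gerq(A)$, one gets $A[I(H)]_\gerq = \bigl(\bigcap_{f \in I(H)_\gerq} f^{-1}(T)\bigr)/T$, where $f^{-1}(T) := \{v \in V_\gerq(A) \mid f(v) \in T\}$.

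It then remains to verify $\bigcap_{f \in I(H)_\gerq} f^{-1}(T) = \Lambda$. The inclusion $\supseteq$ is exactly the defining condition of $I(H)_\gerq$. For $\subseteq$: as $\calO_{L_\gerq}$ is a discrete valuation ring with uniformizer $\pi$, the theory of elementary divisors furnishes a basis $(e_1,e_2)$ of $\Lambda$ with $T = \pi^a\calO_{L_\gerq}e_1 \oplus \pi^b\calO_{L_\gerq}e_2$ for integers $0 \le a \le b$; then $f_0 := \diag(\pi^a,\pi^b)$ in this basis satisfies $f_0(\Lambda) = T$, so $f_0 \in I(H)_\gerq$, while $f_0^{-1}(T) = \Lambda$, which forces the intersection into $\Lambda$. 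Hence $A[I(H)]_\gerq = \Lambda/T = H_\gerq$ for all $\gerq \nmid p$, and with the prime-to-$p$ reduction this gives $A[I(H)] = H$, with $I = I(H)$ the desired integral ideal of $\calO$.

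The only step I expect to require real care is the bookkeeping underlying the second paragraph: matching $H$ with an \emph{overlattice} $\Lambda \supseteq T_\gerq(A)$ (rather than the reverse), describing $A[I(H)]_\gerq$ as an intersection of preimages $f^{-1}(T)$, and observing that this intersection is unchanged on passing from the global ideal $I(H)$ to its localization $I(H)_\gerq$ — which holds because taking $\calO_{L_\gerq}$-linear combinations of the $f$'s only enlarges each $f^{-1}(T)$, as $T$ is $\calO_{L_\gerq}$-stable. Granted the earlier results, the identification $\calO_\gerq \cong M_2(\calO_{L_\gerq})$ and the elementary-divisor computation are routine.
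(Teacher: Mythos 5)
Your proof is correct, and it is organized somewhat differently from the paper's. The paper's own argument is a one-line construction in the opposite direction: at each $\gerq \nmid p$ it presents the finite-index inclusion of lattices $T \subseteq \Lambda$ attached to $H_\gerq$ by a matrix $m_\gerq \in M_2(\calO_{L_\gerq})$ (elementary divisors), takes the left ideal which is locally generated by $m_\gerq$ at these primes and equal to $\calO_\gerq$ elsewhere, and recovers $H$ from the computation $A[J]_\gerq \cong m_\gerq^{-1}T/T$ already carried out in the preceding kernel-ideal lemma; your $f_0 = \diag(\pi^a,\pi^b)$ is exactly this $m_\gerq$. You instead take the canonical annihilator ideal $I(H)$ and verify $A[I(H)] = H$, which buys two things: you never need to glue prescribed local ideals into a global one, and you obtain the sharper statement that every prime-to-$p$ $\calO_L$-subgroup is the kernel of its annihilator ideal, i.e., the genuine converse to the preceding lemma. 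The one step you assert without argument is the identification $I(H)_\gerq = \{ f \in \calO_\gerq \mid f(\Lambda) \subseteq T \}$: the inclusion of $I(H)_\gerq$ into the right-hand side is covered by your remark about $\calO_{L_\gerq}$-linear combinations, but placing $f_0$ inside $I(H)_\gerq$ requires the reverse inclusion, i.e., that localization commutes with forming the annihilator of $H$. This is true and routine --- for instance, approximate $f_0$ $\gerq$-adically by some $f' \in \calO$, observe that $\gerq^N \calO_\gerq \subseteq I(H)_\gerq$ once $\gerq^N$ kills $H_\gerq$, and multiply $f'$ by the prime-to-$\gerq$ part of the order of $H$ to annihilate the remaining primary components --- but it deserves a sentence, since it is the only point where the global ideal $I(H)$ interacts with your local computation.
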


\begin{proof}
This is clear, as any sublattice (of finite index) in
$\calO_{L_{\gerq}}^2$ has the form $m_{\gerq} \calO_{L_{\gerq}}^2$
for some $m_q \in M_2(\calO_{L_{\gerq}})$, $ \gerq \not| \ p$.
\end{proof}

\noindent Note that $A/A[I] \cong \Hom_{\calO}(I,A) \cong A
\otimes_{\calO} I^{-1}$.

\begin{thm} \label{bijection} Let $h^+(L)=1$. Let $A$ be a
superspecial abelian variety with RM satisfying the Rapoport
condition. The map $I \mapsto A \otimes_{\calO} I$ induces a
functorial bijection from left ideal classes of $\calO$ to
superspecial abelian varieties with RM satisfying the Rapoport
condition.
\end{thm}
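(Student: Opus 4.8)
The plan is to establish the bijection between left ideal classes of $\calO$ and superspecial abelian varieties with RM satisfying the Rapoport condition, using the Serre tensor construction $I \mapsto A \otimes_\calO I$ as the map, with the three preceding lemmas doing the heavy lifting. First I would check the map is well-defined: by the lemma that $A \otimes_\calO I$ is again superspecial with RM satisfying Rapoport, the target is correct, and two isomorphic projective rank-one $\calO$-modules (hence ideals in the same left ideal class, since $h^+(L)=1$ lets us identify isomorphism classes of such modules with left ideal classes) produce isomorphic abelian varieties, so the map descends to classes. The functoriality is the standard compatibility of the Serre tensor construction with $\calO_L$-linear maps, recorded via $A/A[I] \cong A \otimes_\calO I^{-1}$.

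For surjectivity I would argue as follows: given any superspecial abelian variety $A'$ with RM satisfying Rapoport, Lemma \ref{avoidingp} produces an $\calO_L$-isogeny $f: A \arr A'$ of degree prime to $p$; its kernel $H := \Ker(f)$ is an $\calO_L$-subgroup of $A$ of prime-to-$p$ order, hence by the third lemma $H = A[I]$ for some integral ideal $I$ of $\calO$, and then $A' \cong A/A[I] \cong A \otimes_\calO I^{-1}$, exhibiting $A'$ in the image. For injectivity, suppose $A \otimes_\calO I \cong A \otimes_\calO J$ as abelian varieties with RM. After replacing $I, J$ by ideals in the same class that are locally principal and have prime-to-$p$ norm (possible since the kernel-ideal lemma and the argument of Lemma \ref{avoidingp} let us move within a class to an ideal whose norm is prime to $p$), we translate the isomorphism $A/A[I] \cong A/A[J]$ into the statement $A[I] = A[J]$ as subgroup schemes — this uses that the isogeny $A \to A/A[I]$ is, up to isomorphism of the target, the canonical one and that both kernels sit inside $A$ — and then invoke the kernel-ideal lemma: $I = I(A[I]) = I(A[J]) = J$, so $I$ and $J$ lie in the same left ideal class.

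The main obstacle I expect is the reduction step in injectivity: one must argue carefully that an abstract isomorphism $A \otimes_\calO I \xrightarrow{\sim} A \otimes_\calO J$ of polarized $\calO_L$-abelian varieties forces the two kernel subgroups inside $A$ to coincide, rather than merely to be abstractly isomorphic. This is where $h^+(L)=1$ is genuinely used: it guarantees (as recalled in Section 2) that any two principal $\calO_L$-polarizations on a given RM abelian variety differ by a totally positive unit which is a square, so the polarized isomorphism class is determined by the $\calO_L$-abelian variety alone, and hence an isomorphism of the quotients lifts compatibly to $A$. Combined with the prime-to-$p$ reduction — needed because the kernel-ideal lemma only applies to ideals of prime-to-$p$ norm, so one must first use the density argument of Lemma \ref{avoidingp} to move both $I$ and $J$ within their classes to such representatives — this closes the argument. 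The remaining verifications (functoriality, well-definedness) are formal properties of the Serre tensor construction and require no serious work.
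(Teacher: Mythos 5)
Your surjectivity argument is sound and is in fact a genuine alternative to the paper's: the paper instead observes that the evaluation map $A\otimes_{\calO}\Hom_{\calO_L}(A,A')\to A'$, $a\otimes\phi\mapsto\phi(a)$, is an isomorphism, so the projective rank-one module $\Hom_{\calO_L}(A,A')$ itself is a preimage of $A'$, whereas you go through Lemma \ref{avoidingp}, the kernel of a prime-to-$p$ isogeny, and the lemma expressing such an $\calO_L$-subgroup as $A[I]$; both routes work, and yours is closer to the classical Waterhouse-style argument the paper alludes to. Well-definedness and functoriality are formal, as you say.

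The injectivity argument, however, has a genuine gap. You claim that an isomorphism $A\otimes_{\calO}I\cong A\otimes_{\calO}J$, i.e. $A/A[I]\cong A/A[J]$, forces $A[I]=A[J]$ as subgroup schemes, and then conclude $I=J$ from the kernel-ideal lemma. That implication is false: isomorphic quotients need not have equal kernels. Take $I=\calO b$ principal with $b$ a non-unit of prime-to-$p$ reduced norm and $J=\calO$; then $A\otimes_{\calO}I\cong A\cong A\otimes_{\calO}J$, yet $A[I]=\Ker(b)\neq 0=A[J]$. Nor does $h^+(L)=1$ repair the step: uniqueness of the principal polarization up to totally positive units says nothing about lifting an isomorphism $A/A[I]\to A/A[J]$ to an automorphism of $A$ carrying one kernel to the other, and no such lift exists in the example just given. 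What is true, and all you need, is that for kernel ideals (which your prime-to-$p$ reduction correctly arranges) one has $A/A[I]\cong A/A[J]$ if and only if $I$ and $J$ differ by right multiplication, i.e. lie in the same left ideal class; this is exactly \cite[Prop. 3.11]{Waterhouse}, and it is how the paper disposes of injectivity in one line. Alternatively, you can bypass kernels entirely: apply $\Hom_{\calO_L}(A,-)$ to the isomorphism $A\otimes_{\calO}I\cong A\otimes_{\calO}J$ and use $\Hom_{\calO_L}(A,A\otimes_{\calO}I)\cong I$ (via Theorem \ref{Tatethm} and local principality) to conclude $I\cong J$ as left $\calO$-modules, which is precisely equality of ideal classes.
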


\begin{proof}
Since all necessary ideals are kernel ideals, the map is injective
by \cite[Prop. 3.11]{Waterhouse}. Functoriality follows from the
definition of the tensor construction. Surjectivity follows from
the general formalism: let $A'$ be another superspecial abelian
variety with RM. Then the natural map:
$$ A \otimes_{\calO} \Hom_{\calO_L}(A,A') \overset{\psi}{\arr} A',
\quad a \otimes \phi \mapsto \phi(a),$$ is a well-defined
isomorphism of abelian varieties with RM, and considering the
projective $\calO$-module $\Hom_{\calO_L}(A,A')$ gives the desired
preimage of $A'$.
\end{proof}

\begin{cor}
All Eichler orders of level $p$ in the quaternion algebra
$B_{p,L}$ arise from geometry.
\end{cor}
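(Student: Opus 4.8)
The plan is to show that every Eichler order of level $p$ in $B_{p,L}$ occurs, up to isomorphism, as $\End_{\calO_L}(A)$ for some superspecial abelian variety $A$ with RM satisfying the Rapoport condition. First I would start from a fixed such $A_0$ — for instance $E \otimes_\Z \calO_L$ with $E$ a supersingular elliptic curve over $\Fp$ — whose endomorphism order $\calO := \End_{\calO_L}(A_0)$ is an Eichler order of level $p$ in $B_{p,L}$ by the Proposition of Section \ref{precedante}. By Theorem \ref{bijection}, the Serre tensor map $I \mapsto A_0 \otimes_\calO I$ is a bijection from left ideal classes of $\calO$ onto superspecial abelian varieties with RM satisfying the Rapoport condition, and under this bijection $\End_{\calO_L}(A_0 \otimes_\calO I)$ is the right order $\calO_r(I)$ of $I$. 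Hence the set of endomorphism orders arising from geometry is exactly the set of right orders of left $\calO$-ideals, i.e.\ the set of orders in the genus of $\calO$.

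\noindent The remaining point is purely arithmetic: any two Eichler orders of level $p$ in $B_{p,L}$ lie in the same genus, i.e.\ they are everywhere locally conjugate. This is standard — locally at a prime $\gerq$ of $L$ not dividing $p$ the order is a maximal order in $M_2(L_\gerq)$, and all maximal orders of $M_2(L_\gerq)$ are conjugate; at a prime $\gerp \mid p$ with $f(\gerp/p)$ odd the order is the (unique) maximal order of the division algebra $B_{p,L}\otimes L_\gerp$; and at $\gerp \mid p$ with $f(\gerp/p)$ even the order is $\Gamma_0(p)_\gerp \subset M_2(\calO_{L_\gerp})$, and all such "standard" Eichler orders of level $\gerp$ are conjugate in $\GL_2(L_\gerp)$. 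Thus two Eichler orders of level $p$ in $B_{p,L}$ are locally conjugate everywhere, hence in the same genus, and by the strong approximation/ideal-theoretic dictionary each such order is the right order $\calO_r(I)$ of some left $\calO$-ideal $I$. Combining this with the bijection of Theorem \ref{bijection} yields that every Eichler order of level $p$ in $B_{p,L}$ is realized as $\End_{\calO_L}(A_0 \otimes_\calO I)$.

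\noindent The main obstacle — really the only substantive thing to check beyond invoking Theorem \ref{bijection} — is ensuring the local-conjugacy claim is stated correctly in the present setting, in particular that $h^+(L)=1$ and $p$ unramified guarantee there is no obstruction coming from the class group of $L$ to passing from "locally conjugate to $\calO$" to "isomorphic to the right order of an actual left $\calO$-ideal". Under $h^+(L)=1$ every projective rank-one $\calO$-module is globally an ideal (indeed the narrow class number hypothesis kills the relevant $\mathrm{Cl}^+(L)$-torsor), so the genus-to-ideal-class correspondence is exactly as in the classical maximal-order case of Deuring, and no extra care is needed. I would therefore phrase the proof as: "By Theorem \ref{bijection} the orders $\End_{\calO_L}(A)$, as $A$ ranges over superspecial abelian varieties with RM satisfying Rapoport, are precisely the right orders of left $\calO$-ideals; since all Eichler orders of level $p$ in $B_{p,L}$ form a single genus and $h^+(L)=1$, every such order arises this way."
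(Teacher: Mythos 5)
Your proposal is correct and follows essentially the same route as the paper: all Eichler orders of level $p$ in $B_{p,L}$ form a single genus (the paper cites \cite[Prop.~5.3]{Brz1} where you verify local conjugacy by hand), the right orders of a complete set of left ideal class representatives exhaust that genus (the paper cites \cite[Lem.~4.10]{Vigneras} and \cite[Cor.~5.5]{Vigneras} in place of your strong-approximation remark), and Theorem \ref{bijection} realizes these right orders geometrically as $\End_{\calO_L}(A_m)$. The only quibble is that $h^+(L)=1$ is not what makes an order in the genus of $\calO$ the right order of a left $\calO$-ideal (that is a general linking-ideal fact); it is needed because Theorem \ref{bijection} itself assumes it — but this does not affect the validity of your argument.
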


\begin{proof}
Since Eichler orders of level $p$ are locally isomorphic
(\cite[Prop. 5.3]{Brz1}), the set of right orders of a complete
set of representatives of left, projective ideal classes of any
Eichler order of level $p$ represent all isomorphism classes by
\cite[Lem. 4.10, p.26]{Vigneras} and \cite[Cor. 5.5,
p.88]{Vigneras}. By Theorem \ref{bijection}, it is enough to
consider the right orders $\End_{\calO_L}(A_m)$ of
$\Hom_{\calO_L}(A,A_m)$ for varying $m$.
\end{proof}

The next step is to show that the quadratic module structure
coming from the norm of the quaternion algebra can be defined
geometrically. As for supersingular elliptic curves, a necessary
ingredient is the existence of two isogenies of coprime degree
between any two superspecial points.

Let $A$ be a fixed principally polarized superspecial abelian
variety with RM. Let $G$ be the group scheme over $\Spec(\Z)$
whose group of $R$-points, for any commutative ring $R$, is:
$$G(R) = \{ \phi \in (\End_{\calO_L}(A) \otimes R)^{\times} |
\phi' \phi = 1 \},$$ where $\phi \mapsto \phi'$ is the Rosati
involution induced by the polarisation of $A$.

\noindent Using Fact \ref{fact23}, the superspecial locus
$\Lambda$ is parametrized by double cosets by a theorem of Yu
(\cite[Thm. 10.5]{Yu}). More precisely, the set $\Lambda$ is in
natural bijection with the adelic double cosets $G(\Q) \backslash
G(\A_f) / G(\widehat{\Z})$. As in the elliptic case, a standard
application of the strong approximation theorem (cf.
\cite[p.81]{Vigneras}) then shows that for $\ell \neq p$, the
$\ell$-power Hecke orbit of a superspecial point on the Hilbert
moduli space is the whole superspecial locus (giving incidentally
a stronger result than Lemma \ref{avoidingp}).
\noindent Summing
up, we get the desired:

\begin{cor} \label{strongapprox}
Let $A_1,A_2$ be two principally polarized superspecial abelian
varieties with RM. Then for any prime $\ell \neq p$, there exists
an $\ell$-power isogeny between $A_1$ and $A_2$. In particular,
the module $\Hom_{\calO_L}(A_1,A_2)$ contains two isogenies which
are of coprime degrees.
\end{cor}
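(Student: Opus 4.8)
The plan is to derive the corollary from the adelic description of the superspecial locus recalled just above, together with strong approximation for the unit group $G$. Recall, via Fact~\ref{fact23} and Yu's theorem \cite[Thm.~10.5]{Yu}, that the set $\Lambda$ of superspecial points is in bijection with $G(\Q)\backslash G(\A_f)/G(\widehat{\Z})$, where $G$ is the $\Q$-group with $G(R)=\{\phi\in(\End_{\calO_L}(A)\otimes R)^{\times}\mid\phi'\phi=1\}$, $\phi\mapsto\phi'$ the Rosati involution attached to the principal polarisation. Since $\End_{\calO_L}(A)$ is an order in the quaternion algebra $B_{p,L}$ over $L$, the group $G$ is, up to an inner twist, the Weil restriction from $L$ to $\Q$ of the reduced-norm-one group $\SL_1(B_{p,L})$; in particular $G$ is semisimple and simply connected, and for a finite place $w$ of $L$ the local group $G(L_w)$ is $\SL_2(L_w)$ when $B_{p,L}$ is split at $w$ and is the compact group $\SL_1$ of a division quaternion algebra when $B_{p,L}$ ramifies at $w$.

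First I would verify the non-compactness hypothesis at the chosen prime. Fix $\ell\neq p$. For every place $w$ of $L$ above $\ell$ the algebra $B_{p,L}\otimes_L L_w$ is split, because $B_{p,\infty}$ is ramified only at $p$ and $\infty$; hence $G(\Q_\ell)=\prod_{w\mid\ell}G(L_w)\cong\prod_{w\mid\ell}\SL_2(L_w)$ is non-compact. Thus the Eichler condition holds relative to $S=\{\ell\}$, and the strong approximation theorem (Kneser--Platonov; see \cite[p.~81]{Vigneras}) shows that $G(\Q)$ is dense in the prime-to-$\ell$ finite adeles $G(\A_f^{(\ell)})$. Since the prime-to-$\ell$ part $G(\widehat{\Z}^{(\ell)})$ of $G(\widehat{\Z})$ is open, density forces the factorisation $G(\A_f^{(\ell)})=G(\Q)\cdot G(\widehat{\Z}^{(\ell)})$.

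The geometric conclusion then follows by a formal manipulation of double cosets. Write $G(\A_f)=G(\Q_\ell)\times G(\A_f^{(\ell)})$ and represent a class in $\Lambda$ by a pair $(g_\ell,g^{(\ell)})$. Using the factorisation, write $g^{(\ell)}=\gamma u$ with $\gamma\in G(\Q)$ and $u\in G(\widehat{\Z}^{(\ell)})$; then $(g_\ell,g^{(\ell)})$ and $(\gamma^{-1}g_\ell,u)$ define the same class, so every class in $\Lambda$ has a representative whose prime-to-$\ell$ component lies in $G(\widehat{\Z}^{(\ell)})$. These are precisely the classes reachable from a fixed base point by the $\ell$-power Hecke correspondences, so the $\ell$-power Hecke orbit of any superspecial point exhausts $\Lambda$. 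Translating the Hecke correspondences at $\ell$ back into morphisms of abelian varieties shows that for $A_1,A_2$ as in the statement there is an $\ell$-power $\calO_L$-isogeny $A_1\arr A_2$ (the promised strengthening of Lemma~\ref{avoidingp}). Finally, applying this with two distinct auxiliary primes $\ell_1,\ell_2\neq p$ produces an $\ell_1$-power isogeny and an $\ell_2$-power isogeny in $\Hom_{\calO_L}(A_1,A_2)$, which have coprime degrees.

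The main obstacle is not a single estimate but the bookkeeping in the last step: one must confirm that varying the $\ell$-adic component of a double-coset representative while keeping the prime-to-$\ell$ component inside $G(\widehat{\Z}^{(\ell)})$ corresponds exactly to composing with $\ell$-power $\calO_L$-isogenies, and that this relation is symmetric via dual isogenies, so that the word ``orbit'' has its expected meaning. One must also keep track of the fact that the principal-polarisation hypothesis is what makes $G$ the Rosati-norm-one group of Yu's parametrisation and forces $G(\R)$ to be compact. By contrast, the non-compactness condition needed to run strong approximation holds for \emph{every} $\ell\neq p$, for the elementary reason that the ramification set of $B_{p,\infty}$ is $\{p,\infty\}$.
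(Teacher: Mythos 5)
Your proposal is correct and follows essentially the same route as the paper: the paper deduces the corollary precisely from Yu's double-coset parametrization of the superspecial locus together with strong approximation (Vign\'eras, p.~81), concluding that the $\ell$-power Hecke orbit of one superspecial point is the whole locus, and the coprimality claim comes from taking two auxiliary primes $\ell_1,\ell_2\neq p$. You merely supply more of the details (verification of the non-compactness/Eichler condition at $\ell$ and the double-coset bookkeeping) that the paper leaves implicit.
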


There remains to give to the module $\Hom_{\calO_L}(A_1,A_2)$ a
quadratic structure by defining an associated quadratic form
geometrically. The presence of polarisations and the totally real
field $L$ introduces some ambiguity, so we give details.

\noindent Let $A_1,A_2$ be supersingular abelian varieties with
RM. Let $\lambda_i : A_i \overset{\cong}{\arr} A_i^{t}, i=1,2,$ be
principal $\calO_L$-polarisations, and define, for $\phi \in
\Hom_{\calO_L}(A_1,A_2)$ $$||\phi|| = || \phi ||_{\lambda_1,
\lambda_2} := \lambda_1^{-1} \circ \phi^{t} \circ \lambda_2 \circ
\phi, \qquad
\begin{CD}
{A_2^t} @< {\lambda_2} << {A_2} \\
@ V{\phi^t} VV @ AA {\phi} A \qquad .\\
{A_1^t} @>> {\lambda_1^{-1}} > {A_1} \\
\end{CD}
$$

\noindent Then we obtain a function which we call the
$\calO_L$-degree:
$$|| - ||: \Hom_{\calO_L}(A_1,A_2) \arr \End_{\calO_L}(A_1).$$

\begin{lem} \label{oldegree}
The $\calO_L$-degree $||-||$ takes values in $\calO_L$ and is a
totally positive $\calO_L$-integral quadratic form i.e.,
\begin{enumerate}
\item $||\phi|| = 0 \mbox{ if and only if } \phi = 0 $ and
$||\phi|| \gg 0$ i.e., it is totally positive for all $\phi \neq
0$; \item $<\phi,\psi>_{\calO_L} := || \phi + \psi || - ||\phi|| -
||\psi|| =  \lambda_1^{-1} \psi^t \lambda_2 \phi +
\lambda_1^{-1}\phi^t \lambda_2 \psi, $ is a symmetric
$\calO_L$-bilinear form; \item $|| \ell \circ \phi || = \ell^2 ||
\phi ||, \mbox{ for } \ell \in \calO_L $.
\end{enumerate}
\end{lem}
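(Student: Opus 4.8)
The plan is to first pin $||\phi||$ down to $\calO_L$, then prove positivity and nondegeneracy (part (1)), and finally dispose of (2) and (3) by direct manipulation of the defining composite. I would start by checking that $||\phi|| = \lambda_1^{-1}\circ\phi^t\circ\lambda_2\circ\phi$ really is an $\calO_L$-linear endomorphism of $A_1$: it is the composite $A_1\xrightarrow{\phi}A_2\xrightarrow{\lambda_2}A_2^t\xrightarrow{\phi^t}A_1^t\xrightarrow{\lambda_1^{-1}}A_1$, and each arrow is $\calO_L$-equivariant, using $\phi\circ\iota_1(r)=\iota_2(r)\circ\phi$, the $\calO_L$-linearity $\lambda_i\circ\iota_i(r)=\iota_i(r)^t\circ\lambda_i$ of the polarisations (equivalently $\lambda_i^{-1}\circ\iota_i(r)^t=\iota_i(r)\circ\lambda_i^{-1}$), and $\phi^t\circ\iota_2(r)^t=\iota_1(r)^t\circ\phi^t$; hence $||\phi||\in\End_{\calO_L}(A_1)=\Cent_{\End(A_1)}(\iota_1(\calO_L))$. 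Moreover, since $\lambda_1$ and $\lambda_2$ are symmetric (so $\lambda_i^t=\lambda_i$ and $(\lambda_1^{-1})^t=\lambda_1^{-1}$ under $A_i=A_i^{tt}$) and $(\phi^t)^t=\phi$, a one-line computation with duals shows that $||\phi||$ is fixed by the Rosati involution $\psi\mapsto\psi'=\lambda_1^{-1}\psi^t\lambda_1$ of $(A_1,\lambda_1)$.

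The heart of the matter is upgrading ``Rosati-fixed'' to ``central''. Because $\lambda_1$ is $\calO_L$-linear, the Rosati involution fixes $\iota_1(\calO_L)$ pointwise, so it restricts to an $L$-linear positive involution of $\End^0_{\calO_L}(A_1)=\Cent_{\End^0(A_1)}(L)\cong B_{p,L}$. Now $B_{p,L}=B_{p,\infty}\otimes_\Q L$ is ramified at every archimedean place of $L$ — at each such place the completion is Hamilton's quaternions, since $B_{p,\infty}$ is ramified at $\infty$ and $L$ is totally real — so $B_{p,L}$ is a \emph{totally definite} quaternion algebra. By the Albert classification of positive involutions, the only positive involution of the first kind on a totally definite quaternion algebra is the main (conjugation) involution, whose fixed subspace is exactly the centre $L$. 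Hence $||\phi||\in L$; being also an element of $\End_{\calO_L}(A_1)$, and $\calO_L$ being integrally closed in $L$, we conclude $||\phi||\in\iota_1(\calO_L)$, i.e.\ $||\phi||\in\calO_L$ as asserted.

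For (1), suppose $\phi\neq0$. Then $\phi$ is an isogeny: $\Hom_{\calO_L}(A_1,A_2)\otimes\Q$ is torsion-free over $\End^0_{\calO_L}(A_2)\cong B_{p,L}$ and has $\Q$-dimension $4g=\dim_\Q B_{p,L}$ (by Theorem \ref{Tatethm}), hence is one-dimensional over the division algebra $B_{p,L}$, so any nonzero element becomes invertible over $\Q$, i.e.\ is a quasi-isogeny, and being a morphism it is an isogeny. Consequently $\mu_0:=\phi^t\circ\lambda_2\circ\phi$ is the pullback along $\phi$ of the polarisation $\lambda_2$: it is symmetric, $\calO_L$-linear, and (on geometric points) corresponds to the pullback of an ample line bundle along a finite morphism, hence ample; so it is an $\calO_L$-linear polarisation of $A_1$. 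Since $\lambda_1$ generates the invertible $\calO_L$-module $\Hom_{\calO_L}(A_1,A_1^t)^{sym}$ (any other element $\mu$ has $\lambda_1^{-1}\mu\in\End_{\calO_L}(A_1)\cap L=\calO_L$ by the previous paragraph, and $\mu=\lambda_1\circ\iota_1(\lambda_1^{-1}\mu)$), we may write $\mu_0=\lambda_1\circ\iota_1(c)$ with $c=||\phi||\in\calO_L$; under this identification the ample cone is the totally positive cone of $\calO_L$ (the standard positivity of polarisations), so $||\phi||\gg0$, and in particular $||\phi||\neq0$. Combined with $||0||=0$, this proves (1).

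Properties (2) and (3) are then formal. Expanding $||\phi+\psi||=\lambda_1^{-1}(\phi^t+\psi^t)\lambda_2(\phi+\psi)$ and subtracting $||\phi||+||\psi||$ yields the displayed formula $\langle\phi,\psi\rangle=\lambda_1^{-1}\phi^t\lambda_2\psi+\lambda_1^{-1}\psi^t\lambda_2\phi$, which is manifestly biadditive and symmetric and lands in $\calO_L$ by the foregoing applied to $\phi$, $\psi$, $\phi+\psi$; the $\calO_L$-homogeneity $\langle r\phi,\psi\rangle=\iota_1(r)\langle\phi,\psi\rangle=r\langle\phi,\psi\rangle$ follows by transporting $\iota_1(r)$ through the composite using the same $\calO_L$-linearity identities as in the first paragraph, and applying this twice gives $||\ell\phi||=\iota_1(\ell)^2||\phi||=\ell^2||\phi||$. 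The one genuinely non-formal step, and the place where I would expect to spend the most care, is the passage from Rosati-fixed to central in the second paragraph: it rests squarely on the total definiteness of $B_{p,L}$, which encodes the running hypotheses that $B_{p,\infty}$ is ramified at $\infty$ and that $p$ is unramified in the totally real field $L$.
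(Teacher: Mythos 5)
Your proof is correct and follows essentially the same route as the paper: show $||\phi||$ is Rosati-fixed, invoke Albert's classification (Type III, $B_{p,L}$ totally definite, so the Rosati involution is the canonical one with fixed set the centre $L$), deduce integrality, and obtain total positivity from the fact that $\phi^t\circ\lambda_2\circ\phi$ is a polarisation, i.e.\ from the embedding $\mu\mapsto\lambda_1^{-1}\mu$ of $\NS^0(A_1)$ into the Rosati-symmetric elements. You merely spell out details the paper leaves implicit (the isogeny claim for nonzero $\phi$, the integrality via $\End_{\calO_L}(A_1)\cap L=\calO_L$, and the formal checks of (2) and (3)), so no substantive difference.
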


\begin{proof}
The element $|| \phi ||$ is fixed by the Rosati involution $f
\mapsto f' = \lambda_1^{-1} f^{t} \lambda_1$ :
$$ \lambda_1^{-1} \cdot (\lambda_1^{-1} \circ \phi^t \circ \lambda_2 \circ \phi)^t \lambda_1 =  \lambda_1^{-1} \circ \phi^t \circ \lambda_2 \circ \phi .$$
The Rosati involution fixes $L$ in $\End_{\calO_L}^0(A_1)$ that
is, if $A_1$ and $A_2$ are supersingular abelian varieties, it
follows from Albert's classification that we are in the Type III
situation: the quaternion algebra $\End_{\calO_L}(A_1) \otimes \Q$
over the totally real field $L$ is totally definite, hence the
Rosati involution is the canonical involution i.e., the
conjugation map i.e., $x^{\sigma} = \Tr(x) - x = \overline{x}$ on
the quaternion algebra $B_{p,L}$. Since $\lambda_1$ is principal,
all computations are integral, and the image of $||-||$ is
$\calO_L$.

\noindent Let us check Assertion $(1)$. Clearly, $||\phi|| = 0$ if and only if $\phi$ is the zero map (any non-zero $\calO_L$-homomorphism of abelian varieties is an isogeny). 
The total positivity follows from properties of the embedding of
the N\'eron-Severi group $NS^0(A)$ in
$\End_{\calO_L}^0(A_1)^{sym}$ via the map $\mu \mapsto
\lambda_1^{-1} \mu$: the polarisations are sent to positive
symmetric elements. The remaining claims are straightforward
computations.
\end{proof}

\noindent It is easy to see that the $\calO_L$-degree $||-||$ is
multiplicative: if $\psi \in \Hom_{\calO_L}(A_2,A_3)$ and $\phi
\in \Hom_{\calO_L}(A_1,A_2)$, then $ || \psi \circ \phi
||_{\lambda_1,\lambda_3} = || \psi ||_{\lambda_2,\lambda_3} \cdot
|| \phi ||_{\lambda_1,\lambda_2},$ and this property defines the
quadratic form up to a constant multiple.

\begin{lem} \label{112}
Suppose that $A_1$ and $A_2$ are superspecial. Let $\psi \in
\Hom_{\calO_L}(A_1,A_2)$. Let $\calO = \End_{\calO_L}(A_1)$. We
can use $\psi$ to embed $\Hom_{\calO_L}(A_1,A_2)$ as an
$\calO$-ideal:
$$ \Hom_{\calO_L}(A_1,A_2) \overset{j_{\psi}}{\hookrightarrow} \End_{\calO_L}(A_1),$$
$$ \phi \mapsto \lambda_1^{-1} \circ \psi^t \circ \lambda_2 \circ \phi.$$
Then $$ \Norm(j_{\psi}(\phi)) = ||\psi^t|| \cdot ||\phi||.$$ Let
$I_{\psi}$ be the $\calO$-ideal
$j_{\psi}(\Hom_{\calO_L}(A_1,A_2))$. Then the reduced norm
$N(I_{\psi})$ is equal to the ideal $(||\psi^t||)$.
\end{lem}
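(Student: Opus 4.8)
The plan is to establish three things in turn: that $j_\psi$ is an injective right $\calO$-module homomorphism (so that $I_\psi$ is genuinely an $\calO$-ideal), then the elementwise identity $\Norm(j_\psi(\phi)) = ||\psi^t||\cdot||\phi||$, and finally the value of the reduced norm $N(I_\psi)$. I would assume throughout that $\psi\neq 0$, the case $\psi=0$ being vacuous; then $\psi$, and hence $\psi^t$, is an isogeny (any non-zero $\calO_L$-homomorphism of supersingular abelian varieties with RM is an isogeny, as used in the proof of Lemma~\ref{oldegree}, the algebra $B_{p,L}$ being division since it is ramified at every archimedean place of $L$). For the first step I would note that $j_\psi(\phi) = \lambda_1^{-1}\circ\psi^t\circ\lambda_2\circ\phi$ is a composition $A_1\overset{\phi}{\arr}A_2\overset{\lambda_2}{\arr}A_2^t\overset{\psi^t}{\arr}A_1^t\overset{\lambda_1^{-1}}{\arr}A_1$ of $\calO_L$-linear morphisms and so lands in $\End_{\calO_L}(A_1)=\calO$; that $j_\psi$ is right $\calO$-linear because $\calO$ acts on $\Hom_{\calO_L}(A_1,A_2)$ by precomposition and $j_\psi$ modifies $\phi$ only on the left; and that $j_\psi$ is injective because $\lambda_1,\lambda_2$ are isomorphisms and $\psi^t$ is an isogeny. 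Since $\Hom_{\calO_L}(A_1,A_2)$ is a projective right $\calO$-module of rank one (the corollary following Theorem~\ref{Tatethm}), its image $I_\psi$ is then a (full) right $\calO$-ideal inside $B_{p,L}$.

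For the norm identity, the bridge I would use is that, by the proof of Lemma~\ref{oldegree}, the Rosati involution $f\mapsto f'=\lambda_1^{-1}\circ f^t\circ\lambda_1$ on $\calO$ coincides with the canonical involution of the quaternion algebra $B_{p,L}$, so $\Norm(f) = f'\circ f = ||f||_{\lambda_1,\lambda_1}$ for every $f\in\calO$. Writing $g:=\lambda_1^{-1}\circ\psi^t\circ\lambda_2\in\Hom_{\calO_L}(A_2,A_1)$, so that $j_\psi(\phi)=g\circ\phi$, I would combine this with the multiplicativity of the $\calO_L$-degree recorded just before the statement (applied with $A_3:=A_1$) to obtain $\Norm(j_\psi(\phi)) = ||g\circ\phi||_{\lambda_1,\lambda_1} = ||g||_{\lambda_2,\lambda_1}\cdot||\phi||_{\lambda_1,\lambda_2}$. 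A short computation, using $\lambda_i^t=\lambda_i$ and $\psi^{tt}=\psi$ to get $(\lambda_1^{-1}\circ\psi^t\circ\lambda_2)^t = \lambda_2\circ\psi\circ\lambda_1^{-1}$, then gives $||g||_{\lambda_2,\lambda_1} = \psi\circ\lambda_1^{-1}\circ\psi^t\circ\lambda_2$; equipping $A_i^t$ with the principal polarisation $\lambda_i^{-1}\colon A_i^t\arr (A_i^t)^t=A_i$ one checks that $||\psi^t|| = \lambda_2\circ\psi\circ\lambda_1^{-1}\circ\psi^t = \lambda_2\circ||g||_{\lambda_2,\lambda_1}\circ\lambda_2^{-1}$, and since $||g||_{\lambda_2,\lambda_1}$ lies in the centre $\calO_L$ of $\End_{\calO_L}^0(A_2)$ and $\lambda_2$ is $\calO_L$-linear this conjugation is trivial, giving $||g||_{\lambda_2,\lambda_1}=||\psi^t||$ in $\calO_L$ and hence $\Norm(j_\psi(\phi))=||\psi^t||\cdot||\phi||$ (one may also note all these agree with $||\psi||$).

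For the last step, $N(I_\psi)$ is by definition the fractional $\calO_L$-ideal generated by $\{\Norm(x)\mid x\in I_\psi\}$, which by the previous step equals $(||\psi^t||)\cdot\gerd$, where $\gerd$ is the $\calO_L$-ideal generated by $\{||\phi||\mid\phi\in\Hom_{\calO_L}(A_1,A_2)\}$; it then remains to show $\gerd=\calO_L$. For this I would invoke Corollary~\ref{strongapprox}: it furnishes, for distinct primes $\ell_1,\ell_2\neq p$, an $\ell_i$-power isogeny $\phi_i\in\Hom_{\calO_L}(A_1,A_2)$. Since $\phi_i$ is étale at every prime not above $\ell_i$, the element $||\phi_i||$ is a unit at every prime $\gerq$ of $\calO_L$ not above $\ell_i$ — locally there $||\phi_i||$ is, up to a unit, $\det T_\gerq(\phi_i)$, which is invertible, a routine computation with the Tate (resp. Dieudonné) modules. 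Hence the ideals $(||\phi_1||)$ and $(||\phi_2||)$ are coprime, so $\gerd\supseteq(||\phi_1||)+(||\phi_2||)=\calO_L$, and therefore $N(I_\psi)=(||\psi^t||)$. The step I expect to be the main obstacle is the duality bookkeeping in the middle paragraph: pinning down the convention for the principal polarisation on $A_i^t$ that makes the symbol $||\psi^t||$ unambiguous, and then verifying that $||g||_{\lambda_2,\lambda_1}$ and $||\psi^t||$ coincide as elements of $\calO_L$ and not merely up to conjugation by $\lambda_2$. Everything else is formal or a brief local computation.
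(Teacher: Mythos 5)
Your proposal is correct and follows essentially the same route as the paper: the identity $\Norm(j_{\psi}(\phi)) = ||\psi^t||\cdot||\phi||$ comes from expanding $\overline{j_{\psi}(\phi)}\,j_{\psi}(\phi)$ via the Rosati/canonical involution and using that $\lambda_2\psi\lambda_1^{-1}\psi^t$ is a central element of $\calO_L$ (you package this through the multiplicativity remark, which is the same computation), and the reduced norm is pinned down exactly as in the paper by the two coprime-degree isogenies furnished by Corollary \ref{strongapprox}. The extra details you supply (injectivity and right $\calO$-linearity of $j_{\psi}$, the choice of $\lambda_i^{-1}$ as polarisation on $A_i^t$, the local unit computation for $\ell$-power isogenies) are correct and merely make explicit what the paper leaves implicit.
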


\begin{proof}
Compute the norm of $j_{\psi}(\phi)$:
$$
\begin{array}{ll}
\Norm(j_{\psi}(\phi)) &= \overline{j_{\psi}(\phi)}{j_{\psi}(\phi)} \\
&= [\lambda_1^{-1} \circ (\lambda_1^{-1} \circ \psi^t \circ \lambda_2 \circ \phi)^t \circ \lambda_1] \circ [\lambda_1^{-1} \circ \psi^t \circ \lambda_2 \circ \phi] \\
 &= [\lambda_1^{-1} \phi^t \lambda_2 \psi] [\lambda_1^{-1} \psi^t \lambda_2 \phi] \\
&= [\lambda_2 \psi \lambda_1^{-1} \psi^t] [\lambda_1^{-1} \phi^t \lambda_2 \phi] \\
&= ||\psi^t|| \cdot ||\phi||,
\end{array}
$$
\noindent since $\lambda_2 \psi \lambda_1^{-1} \psi^t \in
\calO_L$. It follows that the norm of $I_{\psi}$, being the
greatest common denominator of the norms of the elements
$||j_{\psi}(\phi)||$, is the greatest common denominator of all
$||\psi^t|| \cdot ||\phi||$, for $\phi \in
\Hom_{\calO_L}(A_1,A_2)$. Any two superspecial abelian varieties
admit two isogenies $\phi_1, \phi_2$ of relatively coprime degree
by Corollary \ref{strongapprox}, thence it follows that
$N(I_{\psi}) = (|| \psi^t ||).$

\end{proof}
\noindent Summarizing the previous discussion, we obtain the
desired link between the norm map and the $\calO_L$-degree.

\begin{prop} \label{indeterminacy}
Let $A_1,A_2$ be principally polarized abelian varieties with RM.
Let $\calO := \End_{\calO_L}(A_1)$. Let $\Hom_{\calO_L}(A_1,A_2)
\cong I$, $I$ an integral $\calO$-ideal. Let $\phi_x \in
\Hom_{\calO_L}(A_1,A_2)$ map to $x \in I$. Then, up to a unit, the
following formula holds:
$$ ||\phi_x|| = \frac{\Norm(x)}{\Norm(I)} .$$
\end{prop}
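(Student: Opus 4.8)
The plan is to derive Proposition \ref{indeterminacy} by combining Lemma \ref{112} with the interpretation of the ideal $I_\psi$ as a model for $\Hom_{\calO_L}(A_1,A_2)$, tracking carefully the ambiguity introduced by the choices of polarizations and representatives. First I would fix a nonzero $\psi \in \Hom_{\calO_L}(A_1,A_2)$ and use it, via $j_\psi$, to embed $\Hom_{\calO_L}(A_1,A_2)$ as the integral $\calO$-ideal $I_\psi \subseteq \End_{\calO_L}(A_1)$, where $\calO = \End_{\calO_L}(A_1)$. By Lemma \ref{112}, for $\phi \in \Hom_{\calO_L}(A_1,A_2)$ mapping to $j_\psi(\phi) = x_\phi \in I_\psi$ we have the exact identity $\Norm(x_\phi) = ||\psi^t|| \cdot ||\phi||$, and moreover $N(I_\psi) = (||\psi^t||)$ as ideals of $\calO_L$. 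Dividing these two relations gives, at the level of elements versus ideals, $||\phi|| = \Norm(x_\phi)/N(I_\psi)$ up to the relation between the element $||\psi^t||$ and the ideal it generates — i.e., up to a unit of $\calO_L$.

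Next I would identify $I_\psi$ with the abstract ideal $I$ in the statement. Since $\Hom_{\calO_L}(A_1,A_2) \cong I$ as $\calO$-modules and $j_\psi$ is an $\calO$-linear embedding of $\Hom_{\calO_L}(A_1,A_2)$ onto $I_\psi$, the two $\calO$-ideals $I$ and $I_\psi$ agree up to an isomorphism of $\calO$-modules. For locally principal (projective rank-one) $\calO$-ideals in a quaternion order, such an isomorphism is realized by multiplication by an element of $B_{p,L}^\times$, and the reduced norm then differs by the reduced norm of that element; but since $I$ is taken integral and $I_\psi$ is integral, and both have the same norm ideal after the identification, the comparison of $\Norm(x)$ (for $x \in I$) with $\Norm(x_\phi)$ (for $x_\phi \in I_\psi$) introduces exactly a unit factor. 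Concretely, one can simply choose the isomorphism $\Hom_{\calO_L}(A_1,A_2) \cong I$ to be compatible with $j_\psi$ up to scaling, so that $x \in I$ corresponds to $x_\phi \in I_\psi$ with $\Norm(x)$ and $\Norm(x_\phi)$ equal up to a totally positive unit, and likewise $\Norm(I) = N(I_\psi) = (||\psi^t||)$ as ideals.

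Combining, for $\phi_x$ mapping to $x$ one gets $||\phi_x|| = \Norm(x_\phi)/||\psi^t|| = \Norm(x)/\Norm(I)$ modulo a unit of $\calO_L$; and since by Lemma \ref{oldegree}(1) the left side $||\phi_x||$ is totally positive (for $x \neq 0$), and $\Norm(x)/\Norm(I)$ is a totally positive element of $\calO_L$ as well, the ambiguity is in fact by a totally positive unit, which under $h^+(L)=1$ is a square of a unit. This pins the formula down exactly up to the square of a unit, hence a fortiori up to a unit, which is all that is claimed. I expect the main obstacle to be the bookkeeping in the second step: making precise in what sense the "abstract" ideal $I$ equals $I_\psi$, and checking that changing $\psi$, changing the chosen polarizations $\lambda_1,\lambda_2$ (which by the discussion before Lemma \ref{oldegree} are unique up to $\calO_L^{\times,+} = (\calO_L^\times)^2$ under $h^+(L)=1$), and changing the module isomorphism $\Hom_{\calO_L}(A_1,A_2) \cong I$ all contribute only unit ambiguities and no worse. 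Once one observes that each of these changes multiplies $||-||$ and $\Norm(\cdot)/\Norm(I)$ by the same unit factor, the proposition follows formally from Lemma \ref{112}.
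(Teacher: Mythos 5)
Your argument is essentially the paper's own: the proposition is stated there as a summary of the preceding discussion, i.e.\ it follows directly from Lemma \ref{112} (the identity $\Norm(j_{\psi}(\phi)) = ||\psi^t||\cdot||\phi||$ together with $N(I_{\psi}) = (||\psi^t||)$), with the identification of the abstract ideal $I$ with $I_{\psi}$ contributing only unit ambiguities. Your extra bookkeeping about changes of $\psi$, of polarizations, and of the module isomorphism is correct and just makes explicit what the paper leaves implicit.
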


\noindent The indeterminacy between the $\calO_L$-degree and the
norm of an {\em element} is thus a totally positive element
well-defined up to a totally positive unit.

We recall the classical description of the theta series associated
to a quadratic module. As we have seen, the $\calO_L$-module
$\Hom_{\calO_L}(A_1,A_2)$ becomes a quadratic module when equipped
with the $\calO_L$-degree. We associate to this quadratic module a
theta series by the recipe:
$$\theta_{\Hom_{\calO_L}(A_1,A_2)} := \sum_{\calO_L \ni \nu \gg 0 \mbox{ or } \nu = 0}  a_{\nu} q^{\nu},$$where $$a_{\nu} = |\left\{ \phi \in \Hom_{\calO_L} (A_1,A_2) \mbox{ such that } || \phi || = \nu
\right\}|.$$

\begin{thm} A theta series constructed from a quadratic $\calO_L$-lattice $(M,q)$ of level $\calN$ yields a Hilbert modular form of weight $2$ and quadratic character $\chi_{M}$ (modulo the level) given by a Gauss sum,
which transforms under the group $$\SL_2(\calO_L \oplus \calN
\cdot \Norm(M) \calD_L)$$ defined as $\{ \left( \begin{smallmatrix} a & b \\
c & d \\ \end{smallmatrix} \right) \in \left( \begin{smallmatrix}
{\calO_L} & {(\Norm(M)\calD_L)^{-1}} \\ {\calN \Norm(M) \calD_L} &
{\calO_L} \\ \end{smallmatrix} \right) | ad-bc = 1 \}, $ where
$\calN$ is the level of the lattice $M$, and $\calD_L$ is the
different of $L$.
\end{thm}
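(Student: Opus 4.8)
This is the Hilbert-modular incarnation of the classical theta transformation formula of Hecke and Siegel, and the plan is to reduce to that formula via the Poisson summation formula. Write $(M,q)$ for the totally positive definite $\calO_L$-valued quadratic lattice in question, of $\calO_L$-rank $n$; in our application $M=\Hom_{\calO_L}(A_1,A_2)$ is locally free of rank two over an Eichler order in $B_{p,L}$, so $n=4$ and the resulting weight is $n/2=2$. Embedding $L$ in $\R^{g}$ via its $g$ archimedean places makes $M\otimes_{\Z}\R$ a positive definite quadratic space, and for $z=(z_1,\dots,z_g)$ in the $g$-fold product $\H^{g}$ of upper half-planes one forms
\[
\theta_M(z)\;=\;\sum_{x\in M}\exp\!\Big(2\pi i\sum_{v\mid\infty}q(x)^{(v)}z_v\Big)\;=\;\sum_{\calO_L\ni\nu\gg 0\ \text{or}\ \nu=0}a_\nu\,q^{\nu},
\]
where $q^{\nu}:=\exp(2\pi i\sum_v\nu^{(v)}z_v)$ and $a_\nu=|\{x\in M:q(x)=\nu\}|$; this is precisely the series of the statement, and absolute convergence is immediate from total positive definiteness.

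First I would establish the behaviour of $\theta_M$ under the two generating families of substitutions. \emph{Translations:} for $b$ in the fractional ideal $(\Norm(M)\calD_L)^{-1}$ occurring as the upper-right entry of the group in the statement, one checks that $\Tr_{L/\Q}(b\,q(x))\in\Z$ for every $x\in M$ — because the $\calO_L$-ideal $\Norm(M)$ is, by definition, the one generated by the values $q(x)$, while $\calD_L^{-1}$ is the $\Tr_{L/\Q}$-dual of $\calO_L$ — whence $\theta_M(z+b)=\theta_M(z)$ with trivial character. \emph{Inversion:} apply Poisson summation to the Gaussian $x\mapsto\exp(2\pi i\sum_v q(x)^{(v)}z_v)$ on $M\otimes_{\Z}\R\cong\R^{ng}$; its Fourier transform is again a Gaussian, now summed over the trace-dual lattice of $M$, which has the shape $\calD_L^{-1}M^{\vee}$ with $M^{\vee}$ the $\calO_L$-linear dual of $M$ for the bilinear form $\langle\,,\rangle_q$. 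Carrying out the Gaussian integral and the covolume computation yields
\[
\theta_M(-1/z)\;=\;C\cdot\Big(\prod_v z_v/i\Big)^{n/2}\theta_{M^{\vee}}(z),
\]
where $-1/z:=(-1/z_1,\dots,-1/z_g)$, the theta series on the right carries the dual quadratic form, and $C>0$ is a constant built from the discriminant $d_L$ and the index $[M^{\vee}:M]$; since $n/2=2$ is an integer, no metaplectic eighth root of unity intervenes.

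Next I would return from $M^{\vee}$ to $M$ at finite level: since $M$ has level $\calN$ one has $\calN M^{\vee}\subseteq M\subseteq M^{\vee}$, so $\theta_{M^{\vee}}$ decomposes into a finite sum of partial theta series indexed by $M^{\vee}/M$, each a translate of $\theta_M$, and a further inversion combined with the translation step collapses this back to $\theta_M$ up to a Gauss sum over $M^{\vee}/M$. From the resulting transformations of $\theta_M$ under the translations $z\mapsto z+b$ and under $z\mapsto -1/z$, modularity under the full group $\Gamma=\SL_2(\calO_L\oplus\calN\Norm(M)\calD_L)$ is then extracted by the standard argument — realising $\theta_M$ as a component of a vector-valued modular form for the Weil representation attached to the finite quadratic module $(M^{\vee}/M,\ x\mapsto\Tr_{L/\Q}(q(x))\bmod\Z)$, equivalently Hecke's direct verification of the transformation law on generators of the congruence group — and the scalar character so acquired is the product of the Gauss sums from the previous step. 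Evaluating those Gauss sums place by place over $\calN$ identifies the character with the quadratic character $\chi_M$ of $(M,q)$, which at a prime $\gerq$ coprime to $\calN$ is the normalised quadratic Gauss sum $\Norm(\gerq)^{-1/2}\sum_{x\in M/\gerq M}\exp\!\big(2\pi i\,\Tr_{L/\Q}(q(x)/\pi_{\gerq})\big)$.

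The hard part is not the mechanism but the bookkeeping of the different $\calD_L$ and the ideal $\Norm(M)$. The different enters precisely because the natural symmetric pairing of an $\calO_L$-lattice is $\calO_L$-valued whereas Poisson summation is forced to use the $\Q$-bilinear form $\Tr_{L/\Q}\circ\langle\,,\rangle_q$, whose dual lattice is twisted by $\calD_L^{-1}$; obtaining the congruence group in exactly the shape stated — rather than merely a $\GL_2(\calO_L)$-conjugate of it — requires these normalisations to be carried out carefully at each finite place, and that is also where the explicit form of $\chi_M$ as a Gauss-sum character is pinned down. The only arithmetic input beyond the classical evaluation of quadratic Gauss sums over finite fields is the hypothesis $h^+(L)=1$: it is what lets the ambient Hilbert modular group take the clean form $\SL_2(\calO_L\oplus\gerc)$ and lets all the lattices in play be genuine $\calO_L$-modules rather than modules twisted by a nontrivial ideal class.
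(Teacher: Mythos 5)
The paper does not prove this theorem at all: its ``proof'' is the single line ``See Eichler, Th.\ I'' (the 1977 \emph{Acta Arith.} paper on theta functions of real number fields). Your plan is, in substance, a reconstruction of that classical proof — Poisson summation against the trace form, the twist of the dual lattice by $\calD_L^{-1}$, partial theta series indexed by $M^{\vee}/M$, and Gauss sums producing the character — so the two routes coincide mathematically; the difference is only that the paper outsources the analytic work to the literature while you sketch it. What your sketch buys is a visible explanation of why the congruence group has the stated shape $\SL_2(\calO_L\oplus\calN\Norm(M)\calD_L)$ and where the different and the norm ideal enter; what the citation buys is that all the bookkeeping (including the exact character) is already done unconditionally in Eichler's Theorem I.

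Two cautions if you were to carry the plan out. First, the step ``extract modularity under the full group from the translations and one inversion'' is the genuinely delicate point: unlike $\SL_2(\Z)$, the groups $\SL_2(\calO_L\oplus\gerc)$ for $g>1$ are not generated in any usable explicit way by the matrices you control, so ``Hecke's direct verification on generators'' is not available; the working versions of the argument either run Poisson summation directly against an arbitrary matrix with $c\neq 0$ (as Eichler and Shimura do) or go adelically through the Weil representation, which is the variant of your plan that actually closes. Second, some minor slips: $\Hom_{\calO_L}(A_1,A_2)$ is projective of rank \emph{one} over the Eichler order (rank $4$ over $\calO_L$), not rank two; the normalising factor of the Gauss sum at $\gerq$ should be $\Norm(\gerq)^{-n/2}$ (here $\Norm(\gerq)^{-2}$), not $\Norm(\gerq)^{-1/2}$; and the hypothesis $h^{+}(L)=1$ is not needed for this theorem — it appears nowhere in the statement, and Eichler's result is unconditional — it is only used later in the paper to put the relevant lattices and polarisations in the principally polarised component.
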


\begin{proof}
See \cite[Th. I]{Eichler}.
\end{proof}

\noindent By specializing this result, we get the desired
description of our theta series:

\begin{cor} Let $h^{+}(L)=1$. Let $A_1,A_2$ be two principally polarized superspecial abelian varieties
with RM. Then the theta series $\theta_{\Hom_{\calO_L}(A_1,A_2)}$
is a Hilbert modular form of parallel weight $2$ and level
$\Gamma_0((p))$ with trivial quadratic character.
\end{cor}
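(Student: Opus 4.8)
\noindent The plan is to deduce the statement from the preceding theorem of Eichler, applied to the quadratic $\calO_L$-lattice $(M,q) := (\Hom_{\calO_L}(A_1,A_2),\ ||-||)$, which is a totally positive $\calO_L$-integral quadratic lattice by Lemma \ref{oldegree}. It then suffices to pin down the invariants appearing in Eichler's theorem — the rank of $M$, its level $\calN$, and its quadratic character $\chi_M$ — and to recognize the congruence group that comes out. The rank is $4$: by the corollary of Section \ref{leftidealclasses}, $M$ is a projective rank-one module over $\calO := \End_{\calO_L}(A_1)$, which is an order in the quaternion algebra $B_{p,L}$ over $L$, so $M$ is free of rank $4$ over $\calO_L$ after localizing. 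Eichler's theorem therefore produces a form of (parallel) weight $2$, the weight being parallel because of the shape $q^{\nu} = e^{2\pi i\,\Tr(\nu z)}$ of the Fourier expansion.

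\noindent Next I would compute the level. By Proposition \ref{indeterminacy} together with Lemma \ref{112}, the lattice $(M,||-||)$ is isometric to $(I,\ \Norm(-)/\Norm(I))$ for an integral $\calO$-ideal $I$, up to scaling by a totally positive unit (which, under $h^+(L)=1$, is a square and hence harmless); and $\calO$ is an Eichler order of level $p$ in $B_{p,L}$ by the Proposition of Section \ref{precedante}. The level is then read off locally. At a prime $\gerq \nmid p$ one has $\calO_\gerq \cong M_2(\calO_{L_\gerq})$ with $I_\gerq$ principal, so $q$ is there the determinant (reduced-norm) form on $M_2(\calO_{L_\gerq})$, which is unimodular. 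At a prime $\gerq \mid p$: if $f(\gerq/p)$ is odd, $B_{p,L}\otimes L_\gerq$ is a division algebra with maximal order $\calO_\gerq$, whose reduced-norm form has level $\gerq$; if $f(\gerq/p)$ is even, $\calO_\gerq \cong \Gamma_0(p)_\gerq$ and again the reduced-norm form has level $\gerq$. Since $p$ is unramified, $\prod_{\gerq \mid p}\gerq = (p)$, so $(M,q)$ has level $\calN = (p)$.

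\noindent For the character $\chi_M$ — the Gauss-sum character modulo $\calN$ that Eichler attaches to the discriminant of the Gram matrix of $(M,q)$ — the same local analysis gives that this discriminant is $(p)^2$ up to a totally positive unit; since $h^+(L)=1$ makes every totally positive unit a square, the discriminant is a square, the Gauss sums are trivial, and $\chi_M = 1$. For the group: $h^+(L)=1$ forces $h(L)=1$, and, choosing $I$ coprime to $p$ (possible since $\Hom_{\calO_L}(A_1,A_2)$ contains prime-to-$p$ isogenies by Corollary \ref{strongapprox}, while $\calD_L$ is prime to $p$ as $p$ is unramified), the ideal $\Norm(M)\calD_L$ is principal and prime to $p$, say $(t)$. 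Conjugation by $\diag(1,t)$ then carries Eichler's group $\SL_2(\calO_L \oplus \calN\,\Norm(M)\calD_L)$ onto $\left\{\left(\begin{smallmatrix} a & b \\ c & d \end{smallmatrix}\right) \in M_2(\calO_L) : ad-bc=1,\ c \equiv 0 \bmod (p)\right\} = \Gamma_0((p))$, and pulling $\theta_{\Hom_{\calO_L}(A_1,A_2)}$ back along this conjugation merely rescales the variable; hence the theta series is a Hilbert modular form of parallel weight $2$ for $\Gamma_0((p))$ with trivial character.

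\noindent The main obstacle is the precise determination of the level: one must carry out the local reduced-norm computation on Eichler orders of level $p$ at the primes dividing $p$ — especially in the division-algebra case $f(\gerq/p)$ odd, where $\calO_\gerq$ is not a matrix algebra — and check that the discriminant is a square, so that the character is trivial. Everything else is bookkeeping with Eichler's formula together with an elementary conjugation.
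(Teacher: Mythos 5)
Your proposal is correct and takes essentially the same route as the paper: the paper obtains this corollary purely by specializing Eichler's theta theorem to the quadratic $\calO_L$-module $(\Hom_{\calO_L}(A_1,A_2), ||-||)$ built from Lemma \ref{oldegree}, Lemma \ref{112} and Proposition \ref{indeterminacy}, exactly as you do. Your explicit local computation of the rank, level and discriminant of the norm form on an Eichler order of level $p$, and the conjugation identifying Eichler's transformation group with $\Gamma_0((p))$, merely fill in details that the paper leaves implicit in the phrase ``by specializing this result.''
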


All results of this section admit analogous versions for levels
dividing $p$, with the difference that the corresponding abelian
varieties do not satisfy Rapoport's condition. The essential point
is that the endomorphism orders are also Eichler. We make this
more precise for level one i.e., maximal orders. Let $A^{+}$ be a
superspecial abelian variety with RM such that $\calO^{+} :=
\End_{\calO_L}(A^{+})$ is a maximal order of level one in
$B_{\infty_1, \dots, \infty_g} = B_{p,\infty} \otimes L$, as
constructed in Section \ref{precedante}. For short, we call such a
superspecial abelian variety with RM an abelian variety of level
one.

\begin{prop} Let $h^+(L)=1$. The map $I \mapsto A^{+} \otimes_{\calO^{+}} I$ induces a
functorial bijection from left ideal classes of $\calO^{+}$ to
isomorphism classes of abelian varieties of level one.
\end{prop}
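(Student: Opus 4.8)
The plan is to mimic the proof of Theorem~\ref{bijection} verbatim, replacing the superspecial abelian variety $A$ satisfying the Rapoport condition by the level-one abelian variety $A^{+}$ and its maximal endomorphism order $\calO^{+} = \End_{\calO_L}(A^{+})$. The key observation is that every ingredient used in the proof of Theorem~\ref{bijection} only requires that the relevant endomorphism order be \emph{hereditary}, and maximal orders are hereditary (indeed $\calO^{+}$ is an Eichler order of level one, so of squarefree level). Thus the Serre tensor construction $I \mapsto A^{+} \otimes_{\calO^{+}} I$ is the candidate bijection, and I would verify the three familiar properties in turn: well-definedness (the tensor of $A^{+}$ with a projective rank-one $\calO^{+}$-module is again an abelian variety with RM whose endomorphism order is again maximal of level one), injectivity, and surjectivity.

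For well-definedness, I would argue as in the lemma preceding Theorem~\ref{bijection}: since $I$ is locally principal, at every prime $\gerq$ the Tate module (resp.\ Dieudonn\'e module at $p$) of $A^{+}\otimes_{\calO^{+}}I$ is isomorphic to that of $A^{+}$ after a change of lattice by an element of $\GL$, so the local isomorphism class of the $p$-divisible group is unchanged --- here one uses Fact~\ref{fact23} at $p$, together with the explicit description of $\D(A^{+})$ from Section~\ref{precedante}, to see that $A^{+}\otimes_{\calO^{+}}I$ is still superspecial and still fails Rapoport in exactly the same way. Hence it is again an abelian variety of level one, and by the $\calO_L$-version of Tate's theorem (Theorem~\ref{Tatethm}) its endomorphism order is $\calO^{+}$ up to the right order of $I$, which is again maximal of level one since all Eichler orders of level one are locally isomorphic (\cite[Prop. 5.3]{Brz1}). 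Functoriality is immediate from the definition of the tensor construction.

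For injectivity and surjectivity, I would repeat the argument of Theorem~\ref{bijection}: surjectivity follows from the general formalism, namely that for any level-one abelian variety $A'$ the evaluation map $A^{+}\otimes_{\calO^{+}}\Hom_{\calO_L}(A^{+},A') \to A'$ is an isomorphism (it is an $\calO_L$-isogeny that is an isomorphism locally everywhere, by Theorem~\ref{Tatethm} and, at $p$, by the uniqueness in Fact~\ref{fact23} applied to the local Dieudonn\'e modules attached to $\calO^{+}$), and $\Hom_{\calO_L}(A^{+},A')$ is a projective rank-one $\calO^{+}$-module. Injectivity follows from \cite[Prop. 3.11]{Waterhouse} once one knows that the relevant ideals are kernel ideals; this is where the hereditary hypothesis is essential, and the three-lemma argument (Lemma~\ref{avoidingp} and the two lemmas following it) applies unchanged, since it used nothing about $\calO^{+}$ beyond local principality of its ideals. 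The main obstacle --- really the only subtle point --- is checking that the prime-to-$p$ approximation and kernel-ideal arguments survive the failure of the Rapoport condition at $p$: at $p$ the subgroups involved are trivial and the ideals are locally the full order, so those lemmas never actually touch the pathological $p$-part, and the argument goes through; one must simply be careful to phrase the Dieudonn\'e-module input in terms of Fact~\ref{fact23} (which is condition-free) rather than in terms of Rapoport's condition.
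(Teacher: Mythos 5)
Your overall route is the paper's intended one: the paper gives this proposition no separate proof, asserting only that the arguments for Theorem \ref{bijection} go through because the endomorphism orders are still Eichler (hereditary), and your reduction of everything to local principality of ideals, together with the observation that the prime-to-$p$ approximation and kernel-ideal lemmas never interact with the $p$-part, is exactly the right reading of that remark.

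There is, however, one concrete misstep: at $p$ you repeatedly appeal to Fact \ref{fact23}, but that fact concerns the \emph{principally quasi-polarized} superspecial Dieudonn\'e module with RM, i.e.\ the Rapoport-type module. The abelian variety $A^{+}$ fails the Rapoport (equivalently Deligne--Pappas) condition, hence admits no principal $\calO_L$-polarization, and its Dieudonn\'e module has invariants $\phi_i\in\{0,2\}$ (Section \ref{precedante}), so it is not the module covered by Fact \ref{fact23}; this is precisely why the paper proves Proposition \ref{classification} in detail for the case $\phi_i\in\{0,2\}$ rather than quoting Fact \ref{fact23} there. The uniqueness statement you need at $p$ --- both to see that $\Hom_{\calO_L}(A^{+},A')$ is locally free of rank one over $\calO^{+}$ and to make the evaluation map an isomorphism in the surjectivity step --- must instead come from Proposition \ref{classification} combined with the explicit invariants computed in Section \ref{precedante} (or from the classification of superspecial crystals in \cite{Yu2}). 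With that substitution you should also say a word about why an \emph{arbitrary} abelian variety of level one has the same invariants at $p$ as $A^{+}$, so that Proposition \ref{classification} applies to it: maximality of the local endomorphism order excludes the Rapoport pattern $\phi_i\equiv 1$, but one must either pin down the admissible $\{0,2\}$-pattern or take the definition of ``level one'' to fix the local Dieudonn\'e module as constructed in Section \ref{precedante}. As written, the citation of Fact \ref{fact23} leaves this step unsupported.
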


\begin{cor} Any maximal order of level one in $B_{p,\infty} \otimes L$ arises from geometry as the
endomorphism order of an abelian variety of level one.
\end{cor}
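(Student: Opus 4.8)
The plan is to reproduce the argument used above to prove that all Eichler orders of level $p$ in $B_{p,L}$ arise from geometry, with the Eichler order of level $p$ there replaced by the maximal order $\calO^{+}$ of level one, and Theorem \ref{bijection} replaced by the preceding Proposition. First I would fix a superspecial abelian variety $A^{+}$ with RM of level one, constructed as in Section \ref{precedante}, so that $\calO^{+} := \End_{\calO_L}(A^{+})$ is a maximal order of level one in $B_{p,L} = B_{p,\infty}\otimes L$; then choose a complete set $I_1, \dots, I_h$ of representatives of the left, projective ideal classes of $\calO^{+}$, and put $A^{+}_m := A^{+}\otimes_{\calO^{+}} I_m$.

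Two things then have to be checked. The first is order-theoretic: since maximal orders in the quaternion algebra $B_{p,L}$ are locally conjugate at every finite place of $L$, the right orders $\calO_r(I_1), \dots, \calO_r(I_h)$ of the chosen representatives exhaust the isomorphism classes of maximal orders of level one in $B_{p,L}$, by \cite[Lem. 4.10, p.26]{Vigneras} together with \cite[Cor. 5.5, p.88]{Vigneras}; this is exactly the analogue of the appeal to \cite[Prop. 5.3]{Brz1} made in the level-$p$ case. The second is geometric: one identifies, just as in the level-$p$ case, the right order of the left $\calO^{+}$-ideal $\Hom_{\calO_L}(A^{+}, A^{+}_m) \cong I_m$ with $\End_{\calO_L}(A^{+}_m)$. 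Granting these, every maximal order of level one is of the form $\End_{\calO_L}(A^{+}_m)$ for some $m$, and by the preceding Proposition each $A^{+}_m$ is an abelian variety of level one, which is the assertion.

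The main obstacle is the second point, the identification $\End_{\calO_L}(A^{+}\otimes_{\calO^{+}} I) \cong \calO_r(I)$. The difficulty is that $A^{+}$ does \emph{not} satisfy the Rapoport condition, so the results of Section \ref{leftidealclasses}, stated there for superspecial abelian varieties satisfying Rapoport, cannot be quoted verbatim; instead one reruns the local computation. Away from $p$ one uses the $\calO_L$-version of Tate's theorem (Theorem \ref{Tatethm}) and $T_\ell(A^{+}) \cong (\calO_L\otimes\Z_\ell)^2$, so that $I$ being locally principal, say $I_\gerq = \calO_\gerq m_\gerq$, turns the claim into the standard quaternion-order identity $\calO_r(\calO_\gerq m_\gerq) = m_\gerq^{-1}\calO_\gerq m_\gerq$. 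At the primes above $p$ one uses Theorem \ref{Tatethm} at $\ell = p$ together with the explicit presentation of $\D(A^{+})$ and of its Frobenius from Section \ref{precedante}, whose isomorphism class is pinned down by Proposition \ref{classification} and Fact \ref{fact23}, to see that the local contribution is forced. Once this local bookkeeping is in place, the rest of the argument is formal.
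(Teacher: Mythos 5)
Your proposal is correct and is essentially the argument the paper intends: the corollary is stated without proof as the direct analogue of the level-$p$ corollary, obtained by combining the preceding Proposition (the bijection $I \mapsto A^{+}\otimes_{\calO^{+}} I$ for level one) with the fact that maximal orders are locally conjugate, so the right orders of a set of left ideal class representatives exhaust all types by \cite[Lem. 4.10; Cor. 5.5]{Vigneras}. Your additional care in rerunning the local computations at $p$ (since $A^{+}$ fails the Rapoport condition, using Theorem \ref{Tatethm}, the explicit $\D(A^{+})$ from Section \ref{precedante}, and Proposition \ref{classification}) only makes explicit what the paper asserts when it says all results of Section \ref{leftidealclasses} carry over to levels dividing $p$.
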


\noindent Moreover, the construction of theta series from two
abelian varieties of level one goes through in the same way as in
level $p$.

\begin{cor} \label{levelonetheta} Let $h^{+}(L)=1$. Let $A_1,A_2$ be two abelian varieties
of level one. Then the theta series
$\theta_{\Hom_{\calO_L}(A_1,A_2)}$ is a Hilbert modular form of
parallel weight $2$ and level $\Gamma_0(1)$ with trivial quadratic
character.
\end{cor}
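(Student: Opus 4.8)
The plan is to run the argument of the level-$p$ corollary above essentially verbatim; the only structural difference is that the endomorphism order is now \emph{maximal} rather than Eichler of level $p$, and this is precisely what trivialises the level of the resulting quadratic $\calO_L$-lattice. First I would record that $\Hom_{\calO_L}(A_1,A_2)$ is a locally principal integral ideal of the maximal order $\calO^{+}=\End_{\calO_L}(A_1)$. This follows from the $\calO_L$-version of Tate's theorem (Theorem~\ref{Tatethm}) exactly as in the corollary following it: the Dieudonn\'e module of a superspecial abelian variety of level one is uniquely determined — it is the module pinned down in Section~\ref{precedante}, corresponding to the admissible set $\{0,2\}$ via Proposition~\ref{classification} — so all relevant Tate and Dieudonn\'e modules of $A_1$ and $A_2$ are isomorphic, and $\Hom_{\calO_L}(A_1,A_2)$ is a projective rank-one left $\calO^{+}$-module, i.e., a locally principal $\calO^{+}$-ideal $I$.

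Next I would put on $\Hom_{\calO_L}(A_1,A_2)$ the $\calO_L$-degree $||-||$ attached to $\calO_L$-polarisations $\lambda_i$ of the $A_i$. The Rapoport condition entered Lemmas~\ref{oldegree} and~\ref{112} and Proposition~\ref{indeterminacy} only through heredity of the relevant order; since a maximal order is hereditary, these results carry over and $||-||$ is a totally positive $\calO_L$-integral quadratic form taking the values $\Norm(x)/\Norm(I)$ up to a totally positive unit. I also need two isogenies of coprime degree between $A_1$ and $A_2$: level-one abelian varieties are still superspecial, and the group $G$ of Corollary~\ref{strongapprox} is again simply connected with $G(\Q_\ell)$ non-compact for every $\ell\neq p$, so the strong-approximation argument behind that corollary applies unchanged (alternatively one composes with the canonical $p$-power isogeny back to an abelian variety satisfying the Rapoport condition and invokes the result there).

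Finally I would identify the invariants of the lattice and feed them into the transformation theorem for theta series of quadratic $\calO_L$-lattices recalled above (\cite[Th.~I]{Eichler}). By Lemma~\ref{112} and Proposition~\ref{indeterminacy} the level ideal $\calN$ of $(\Hom_{\calO_L}(A_1,A_2),||-||)$ is the reduced discriminant of $\calO^{+}$; since $\calO^{+}$ is a maximal order in $B_{p,L}=B_{\infty_1,\dots,\infty_g}$, which ramifies at no finite place of $L$, this is the unit ideal $\calO_L$, while $\Norm(M)=\Norm(I)$ is generated by a totally positive unit because $h^{+}(L)=1$. With $\calN=\calO_L$ the Gauss-sum character $\chi_M$ is trivial and the transformation group $\SL_2(\calO_L\oplus\calN\cdot\Norm(M)\calD_L)$ becomes the full Hilbert modular group $\Gamma_0(1)$; since the lattice has rank $4$ over $\calO_L$ the weight is parallel $2$, so $\theta_{\Hom_{\calO_L}(A_1,A_2)}$ comes out a Hilbert modular form of parallel weight $2$ for $\Gamma_0(1)$ with trivial quadratic character. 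The step I expect to need the most care is the quadratic-lattice bookkeeping when $A_1,A_2$ fail the Rapoport condition and need not be principally polarised: one must verify that a suitable choice of $\calO_L$-polarisations still yields a degree form similar to the intrinsic form $\Norm(x)/\Norm(I)$, and that replacing the level-$p$ Eichler order by a maximal one changes nothing in the reduced-norm computation beyond trivialising the level.
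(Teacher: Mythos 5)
Your argument is correct and takes essentially the same route as the paper, which at this point simply remarks that the level-$p$ construction goes through unchanged once one knows $\End_{\calO_L}(A^{+})$ is a maximal (hence hereditary, Eichler of level one) order, so that Eichler's theorem applies with trivial level and trivial Gauss-sum character; your write-up is a faithful expansion of that remark. One small correction: the triviality of the lattice norm $\Norm(M)$ is not a consequence of $h^{+}(L)=1$ but of the normalization $||\phi_x||=\Norm(x)/\Norm(I)$ together with the existence of isogenies of coprime degree (Lemma \ref{112} and the analogue of Corollary \ref{strongapprox}), which you already invoke earlier in your proof.
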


\section{The Eichler Basis Problem}

The Jacquet-Langlands correspondence, as a special case of the
theta correspondence, provides in a sense a solution to the Basis
Problem. In classical terms, this translates in the statement that
the subspace of Hilbert modular newforms can be embedded in a
space of automorphic forms spanned by suitable theta series. In
the setting of this paper, we want theta series coming from
Eichler orders. Fortunately, this case follows readily from the
literature (cf. \cite[Chap. 9]{HPS}, where are mentioned thornier
issues arising with other (special) orders). \noindent We sketch
the argument. We write $S^{B_{p,L}}_2(\gerN,\C)$ for the space of
quaternionic modular forms on $B_{p,L}$ of level $\Gamma_0(\gerN)$
(as in \cite[p.201]{Hidabook}). The space
$S_2(\Gamma_0(\gerN),\C)$ is the space of classical Hilbert cusp
forms of weight $2$ of level $\Gamma_0(\gerN)$.

\begin{thm} \emph{(\cite[Thm. 16.1]{JL})}
Suppose that $\gerN = \gerN_0 d(B_{p,L})$ for an integral ideal
$\gerN_0$ prime to $d(B_{p,L})$. Then we have a Hecke-equivariant
embedding $S^{B_{p,L}}_2(\gerN,\C) \hookrightarrow
S_2(\Gamma_0(\gerN),\C)$. The image of this embedding consists of
cusp forms in $S_2(\Gamma_0(\gerN),\C)$ new at all primes dividing
$d(B_{p,L})$.
\end{thm}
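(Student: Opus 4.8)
The plan is to reduce the statement to the global Jacquet--Langlands correspondence together with the standard dictionary relating adelic automorphic forms to classical Hilbert modular forms; the genuinely representation-theoretic input is supplied wholesale by \cite{JL}.

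First I would pass to the adelic picture on both sides. On the quaternionic side, $S^{B_{p,L}}_2(\gerN,\C)$ is identified with the space of suitable functions on the finite double coset $B_{p,L}^{\times}(L)\backslash B_{p,L}^{\times}(\A_{L,f}) / U_0(\gerN)$ --- since $B_{p,L}$ is totally definite its archimedean places contribute compact factors and the parallel weight $2$ condition pins down a one-dimensional space there --- modulo the one-dimensional (character) automorphic representations, which play the role of Eisenstein/constant functions. Decomposing under the Hecke action gives $\bigoplus_{\pi'} (\pi'_f)^{U_0(\gerN)}$, the sum over automorphic representations $\pi'$ of $B_{p,L}^{\times}(\A_L)$ of the relevant archimedean type that are not one-dimensional. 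On the classical side, $S_2(\Gamma_0(\gerN),\C)$ is likewise $\bigoplus_{\pi} \pi_f^{U_0(\gerN)}$, the sum over cuspidal automorphic representations $\pi$ of $\GL_2(\A_L)$ that are holomorphic discrete series of weight $2$ at each infinite place; here the hypothesis $h^+(L)=1$ (via strong approximation) is what makes the adelic description match the single classical component $\Gamma_0(\gerN)\backslash \mathfrak{H}^g$.

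Next I invoke the global Jacquet--Langlands correspondence (\cite[Thm.\ 16.1]{JL}): there is a bijection $\pi' \mapsto \mathrm{JL}(\pi')$ between automorphic representations of $B_{p,L}^{\times}(\A_L)$ and discrete automorphic representations of $\GL_2(\A_L)$ that are square-integrable (one-dimensional or a twist of Steinberg) at every place where $B_{p,L}$ ramifies, with $\pi'_v \cong \mathrm{JL}(\pi')_v$ at all remaining places. Thus the non-one-dimensional $\pi'$ occurring in $S^{B_{p,L}}_2(\gerN,\C)$ correspond exactly to cuspidal $\pi$, and since the local components agree away from $d(B_{p,L})$ the Hecke eigensystems there match; combined with strong multiplicity one for $\GL_2$, this yields the claimed Hecke-equivariant embedding of the whole space. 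To identify the image, use that $d(B_{p,L})$ is squarefree (discriminants of quaternion algebras are), so the hypothesis $\gerN = \gerN_0 d(B_{p,L})$ with $\gcd(\gerN_0, d(B_{p,L}))=1$ forces $\gerq \parallel \gerN$ for each $\gerq \mid d(B_{p,L})$, i.e.\ $U_0(\gerN)_\gerq$ is an Iwahori subgroup. Existence of a nonzero $U_0(\gerN)_\gerq$-fixed vector rules out the genuinely ramified twists, so $\mathrm{JL}(\pi')_\gerq$ is an unramified twist of Steinberg --- precisely the condition that the corresponding classical eigenform be \emph{new} at $\gerq$ --- and running this over all $\gerq \mid d(B_{p,L})$ cuts out the subspace of forms new at every prime dividing $d(B_{p,L})$.

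I expect the main obstacle to be bookkeeping rather than anything deep: carefully reconciling the classical normalizations of weight, level and nebentypus on the two sides with the adelic double-coset descriptions --- in particular keeping track of the one-dimensional automorphic representations that must be excised to land in cusp forms, and verifying the local new-vector computation at Iwahori level --- while the hard representation-theoretic content is imported from \cite{JL}.
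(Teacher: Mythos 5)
The paper gives no proof of this theorem: it is quoted wholesale from \cite[Thm. 16.1]{JL} (in the classical formulation of the literature it cites alongside, e.g. \cite{HPS}, \cite{Hidabook}), so your derivation --- adelizing both sides, applying the global correspondence with the one-dimensional representations excised, matching Hecke eigensystems away from $d(B_{p,L})$, and identifying the image via the local analysis at the ramified primes --- is precisely the standard argument that the citation stands in for, and it is correct. One cosmetic slip: at a prime $\gerq \mid d(B_{p,L})$ the level subgroup on the quaternionic side is the unit group of the maximal order of the local division algebra (the Iwahori subgroup lives on the $\GL_2$ side, where having an Iwahori-fixed vector with no spherical vector is what ``new at $\gerq$'' means), but this does not affect the argument.
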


\noindent It follows that if we let $\calH^{\gerN}_{\C}$ be the
prime-to-$\gerN$ Hecke algebra in $\End(S_2^{B_{p,L}}(\gerN,\C))$,
for $\gerN$ squarefree, then $S_2^{B_{p,L}}(\gerN,\C)$ is free of
rank $1$ over $\calH^{\gerN}_{\C}$. We can define a map $\Theta$
via classical theta series associated to Eichler orders (see
\cite[Eq. 7.9]{Hidabook}, cf. \cite[p.294]{Gelbart2}, \cite[\S
10]{Gelbart}) : $$\Theta: S_2^{B_{p,L}}(\gerN,\C)
\otimes_{\calH(\C)} S_2^{B_{p,L}}(\gerN,\C) \arr
S_2^{new}(\Gamma_0(\gerN);\C), \quad (f,g) \mapsto \Theta(f,g),$$
which is an isomorphism since $S_2^{B_{p,L}}(\gerN,\C)$ is free of
rank $1$. We state our conclusion.

\begin{cor} \label{basisEichler} Let $\gerN$ be squarefree.
The space $S^{new}_2(\Gamma_0(\gerN))$ is contained in the span of
theta series coming from left ideals of an Eichler order of level
$\gerN$ in the quaternion algebra $B_{p, \infty} \otimes L$.
\end{cor}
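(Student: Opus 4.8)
The plan is to combine the Jacquet--Langlands embedding (Theorem 6.1 in the excerpt) with the explicit theta lift $\Theta$ described just above the statement, and then to identify the theta series produced by $\Theta$ with the geometric theta series $\theta_{\Hom_{\calO_L}(A_m,A_n)}$ attached to modules of $\calO_L$-isogenies. First I would recall that for $\gerN$ squarefree the quaternion algebra $B_{p,L}$ has discriminant dividing $\gerN$, so writing $\gerN = \gerN_0 \, d(B_{p,L})$ with $\gerN_0$ prime to $d(B_{p,L})$, Theorem 6.1 gives a Hecke-equivariant embedding whose image is exactly the subspace of $S_2(\Gamma_0(\gerN),\C)$ that is new at the primes dividing $d(B_{p,L})$. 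In particular $S_2^{new}(\Gamma_0(\gerN),\C)$, being new at \emph{all} primes dividing $\gerN$, lands inside the image of this embedding.

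Next I would invoke the freeness statement: since $\gerN$ is squarefree, $S_2^{B_{p,L}}(\gerN,\C)$ is free of rank one over the prime-to-$\gerN$ Hecke algebra $\calH^\gerN_\C$, so the map
$$\Theta: S_2^{B_{p,L}}(\gerN,\C) \otimes_{\calH(\C)} S_2^{B_{p,L}}(\gerN,\C) \arr S_2^{new}(\Gamma_0(\gerN),\C)$$
is an isomorphism of $\calH$-modules. Hence every newform in $S_2^{new}(\Gamma_0(\gerN))$ is in the span of the theta lifts $\Theta(f,g)$. It now remains to recognize that these $\Theta(f,g)$ are spanned by theta series attached to left ideals of a fixed Eichler order $\calO$ of level $\gerN$ in $B_{p,L}$: the quaternionic forms $f,g$ are functions on the finite set of left ideal classes of $\calO$, a basis of $S_2^{B_{p,L}}(\gerN,\C)$ is given (up to the Eisenstein part, which plays no role for newforms) by the characteristic functions $e_i$ of the individual ideal classes $[I_i]$, and by the standard formula (Hida, Eq. 7.9) $\Theta(e_i,e_j)$ equals — up to normalization — the theta series of the quadratic module $\Hom(\text{class } i, \text{class } j)$ with its reduced-norm quadratic form, i.e. the theta series coming from the $\calO$-ideal $I_j^{-1}I_i$ equipped with $\Norm(-)/\Norm(I_j^{-1}I_i)$. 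Therefore $S_2^{new}(\Gamma_0(\gerN))$ is contained in the span of theta series coming from left ideals of an Eichler order of level $\gerN$ in $B_{p,\infty}\otimes L$, which is the assertion.

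The main obstacle is the last identification step: one must match the archimedean and non-archimedean normalizations in the definition of $\Theta$ with the naive counting definition of $\theta_{\Hom_{\calO_L}(A_1,A_2)}$, including the role of the quadratic character (trivial here by the Corollary following Theorem 5.11, since $\calN = (p)$ and $h^+(L)=1$), and the identification of $\Hom_{\calO_L}(A_1,A_2)$ with the $\calO$-ideal $I_j^{-1}I_i$ via Proposition \ref{indeterminacy}, where the $\calO_L$-degree and the reduced norm agree only up to a totally positive unit. Under $h^+(L)=1$ this unit ambiguity is harmless because it can be absorbed by rescaling the polarisations (as noted in Section 2), so the geometric theta series coincides, on the nose, with the arithmetic one, and the spanning statement follows. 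All of this is available in the references (\cite[Chap. 9]{HPS}, \cite[Eq. 7.9]{Hidabook}), so the proof is a matter of assembling them; I would present it as a short synthesis rather than re-deriving the theta correspondence.
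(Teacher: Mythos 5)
Your proposal is correct and follows essentially the same route as the paper: the Jacquet--Langlands embedding identifies the quaternionic space with the part of $S_2(\Gamma_0(\gerN),\C)$ new at $d(B_{p,L})$, freeness of rank one over the prime-to-$\gerN$ Hecke algebra makes the theta lift $\Theta$ an isomorphism onto $S_2^{new}(\Gamma_0(\gerN),\C)$, and the lifts are spanned by theta series of left ideals of an Eichler order of level $\gerN$. Your extra care in matching $\Theta(e_i,e_j)$ with the ideal-theoretic theta series $\Norm(-)/\Norm(I_j^{-1}I_i)$ is a more explicit rendering of what the paper delegates to the references (\cite[Chap.\ 9]{HPS}, \cite[Eq.\ 7.9]{Hidabook}), but it is not a different argument.
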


\medskip\noindent
{\bf Acknowledgments.} We would like to thank E.Z. Goren for
introducing us to the Hilbert modular world, and in particular for
bringing to our attention the possibility of level one when the
prime $p$ is unramified. The author was supported by a
postdoctoral fellowship and \emph{kaken-hi} (grant-in-aid) of the
Japanese Society for the Promotion of Science (JSPS) while working
on this paper at the University of T\=oky\=o.

\

\hbox{Email: nicole@ms.u-tokyo.ac.jp} \hbox{Address:  Department
of Mathematical Sciences, University of T\=oky\=o,} \hbox{Komaba,
153-8914, T\=oky\=o, Japan.}

\end{document}